\newcommand{\xleftrightarrows}[2]{%
  \mathrel{\substack{
    \xrightarrow{#1} \\[-0.9ex] 
    \xleftarrow[#2]{}
  }}
}
\theoremstyle{plain}
\newtheorem{theorem}{Theorem}
\newtheorem{proposition}[theorem]{Proposition}
\newtheorem{lemma}[theorem]{Lemma}
\newtheorem{corollary}[theorem]{Corollary}
\newtheorem{fact}[theorem]{Fact}
\theoremstyle{definition}
\newtheorem{definition}[theorem]{Definition}
\newtheorem*{remark}{Remark}
\newtheorem{example}{Example}
\newtheorem*{running}{Example}
\newcommand\R{\mathbb{R}}
\DeclareMathOperator{\im}{im} 
\DeclareMathOperator{\sign}{sign}
\DeclareMathOperator{\e}{e} 
\DeclareMathOperator{\diag}{diag}
\DeclareMathOperator{\id}{Id}
\newcommand{\ones}{1}
\newcommand{\trans}{\mathsf{T}}
\newcommand{\la}{\lambda}
\DeclareMathOperator{\supp}{supp}
\DeclareMathOperator{\aff}{aff}
\newcommand{\dd}[2]{\frac{\text{d} #1}{\text{d} #2}}
\definecolor{darkgreen}{rgb}{0.0,0.7,0.0}
\newcommand\blue[1]{{\color{black}#1}}
\newcommand\gray[1]{{\color{gray}#1}}
\newcommand\marginnote[1]{}
\newcommand\blfootnote[1]{%
  \begin{NoHyper}
  \renewcommand\thefootnote{}\footnote{#1}%
  \addtocounter{footnote}{-1}%
  \end{NoHyper}
}
\begin{document}

\title{Decomposable and essentially univariate mass-action systems:
Extensions of the deficiency one theorem}

\author{Abhishek Deshpande, Stefan M\"uller\textsuperscript{\Letter}}

\maketitle

\begin{abstract}
The classical and extended deficiency one theorems by Feinberg apply to reaction networks with mass-action kinetics that have independent linkage classes or subnetworks, each with a deficiency of at most one and exactly one \blue{absorbing} strong component. The theorems assume the existence of a positive equilibrium
and guarantee the existence of a unique positive equilibrium in every stoichiometric compatibility class.

In our work, we use the {\em monomial dependency} which extends the concept of deficiency.
First, we provide a dependency one theorem for parametrized systems of polynomial equations that are essentially univariate and decomposable. 
As our main result, we present a corresponding theorem for mass-action systems, which permits subnetworks with arbitrary deficiency and arbitrary number of \blue{absorbing} strong components. Finally, to complete the picture, we derive the extended deficiency one theorem as a special case of our more general dependency one theorem.

\vspace{2ex}
\noindent
{\bf Keywords.} 
reaction networks,
mass-action kinetics,
existence of a unique positive equlibrium,
parametrized systems of polynomial equations, 
monomial dependency,
dependency one theorem. 

\vspace{2ex}
\noindent
{\bf AMS subject classification.} 
12D10,
26C10,
92C42
\end{abstract}

\blfootnote{
\scriptsize

\noindent
{\bf Abhishek Deshpande} \\
Center for Computational Natural Sciences and Bioinformatics, 
International Institute of Information Technology Hyderabad,
Hyderabad, Telangana 500032,
India \\[1ex]
{\bf Stefan~M\"uller} \\
Faculty of Mathematics, University of Vienna, Oskar-Morgenstern-Platz 1, 1090 Wien, Austria \\
\Letter \, st.mueller@univie.ac.at
}



\section{Introduction}

Many systems in chemistry and biology (particularly in ecology and epidemiology) and also in economics and engineering are modeled as polynomial or power-law dynamical systems. These models can be formulated as {\em reaction networks} with mass-action or generalized {\em mass-action kinetics}. They capture complex dynamic behaviors, such as multistationarity, oscillations, and chaos, and the corresponding bifurcations.

The most prominent results of (chemical) reaction network theory, as founded in the 1970's by Horn, Jackson, and Feinberg, are the {\em deficiency zero} and {\em one theorems} (for mass-action systems)~\cite{HornJackson1972,Horn1972,Feinberg1972,Feinberg1987,Feinberg1995b}.
In essence, the concept of deficiency captures affine dependencies between {\em complexes} (representing the left- and right-hand sides of the reactions).
The deficiency zero theorem assumes {\em weak reversibility} (and deficiency zero)
and guarantees the existence of a unique positive equilibrium in every {\em stoichiometric compatibility class} (invariant subspace) and for all {\em rate constants} (system parameters);
moreover, it ensures the asymptotic stability of this equilibrium.
The deficiency one theorem has several assumptions.
It applies to mass-action systems with {\em independent} linkage classes or subnetworks,
{\em each} with a deficiency of at most one and exactly one \blue{absorbing} strong component.
Additionally, the theorem assumes the existence of a positive equilibrium (for given rate constants)
and guarantees the existence of a unique positive equilibrium in every compatibility class
(but does not address stability).
Finally, weak reversibility ensures the existence of a positive equilibrium.

Both, the deficiency zero and one theorems,
have been difficult to improve upon.
Only after 2010,
the deficiency zero theorem has been extended,
namely from mass-action to {\em generalized} mass-action kinetics~\cite{MuellerRegensburger2012,MuellerRegensburger2014,Mueller2016,MuellerHofbauerRegensburger2019,CraciunMuellerPanteaYu2019,BorosMuellerRegensburger2020,MuellerRegensburger2023}.
In this work,
we will extend the deficiency one theorem, even in the setting of mass-action kinetics.

The deficiency one theorem was first fully stated in 1987~\cite{Feinberg1987},
but was not proved until 1995~\cite{Feinberg1995a}.
Its extension from independent linkage classes to independent subnetworks
was mentioned in~\cite{Feinberg1987}, and a proof was outlined in~\cite{Feinberg1995a}. 
Since then, the upper bound of one for both the deficiency and the number of \blue{absorbing} strong components (per independent linkage class or subnetwork) has not been addressed.
Only the assumption of existence (of a positive equilibrium)
has been investigated further~\cite{boros2010notes_1,boros2012notes_2,boros2013dependence,boros2013existence}. 
In \cite{boros2010notes_1,boros2012notes_2}, Boros provides an equivalent condition for the existence of a positive equilibrium for reaction networks that satisfy the assumptions of the deficiency one theorem. In~\cite{boros2013dependence}, he characterizes single linkage-class, deficiency one mass-action systems for which a positive equilibrium exists for all or some rate constants. 
Finally, in~\cite{boros2013existence}, Boros proves the existence of a positive equilibrium within every stoichiometric compatibility class for weakly reversible, deficiency one mass-action systems.
Using different methods,
this result can be further extended to weakly reversible (not necessarily deficiency one) mass-action systems~\cite{Boros2019}.

In this work, 
we extend the validity of the deficiency one theorem to a much broader class of mass-action systems. 
First, we consider independent subnetworks (instead of independent linkage classes) from the outset. 
Second, we do not assume that the subnetworks have a deficiency of at most one. 
Instead, we use the {\em monomial dependency} which generalizes the concept of deficiency. 
Specifically, dependency accounts for the fact that non-source vertices in a reaction network do not contribute monomials to the polynomial equations for the equilibria. 
As a consequence, the dependency is often smaller than the deficiency.
Third,
we do not assume that the subnetworks have exactly one \blue{absorbing strong component}.

We illustrate our results in a series of examples in Section~\ref{sec:examples}.
Here, we consider Example~\ref{exa2},
given by the reaction network
\[
0 \leftarrow X_1 \rightleftarrows X_1+X_2 \to X_2 \rightleftarrows 3\, X_1 
\]
or, equivalently, by the ``embedded'' graph
\begin{center}
\begin{tikzpicture}[scale=2]
    
\draw [step=1, gray, very thin] (0,0) grid (3,1);
\draw [ ->, gray, very thin] (0,0)--(3.25,0);
\draw [ ->, gray, very thin] (0,0)--(0,1.25);

\begin{scope}[shift={(0,0)}]

\node[circle, fill=blue, inner sep=1pt, outer sep=5pt] (A) at (1,0) {};
\node[circle, fill=blue, inner sep=1pt, outer sep=5pt] (B) at (1,1) {};
\node[circle, fill=blue, inner sep=1pt, outer sep=5pt] (C) at (0,1) {};
\node[circle, fill=blue, inner sep=1pt, outer sep=5pt] (D) at (3,0) {};
\node[circle, fill=blue, inner sep=1pt, outer sep=5pt] (E) at (0,0) {};


\draw[arrows={-stealth},very thick,blue,transform canvas={xshift=-2pt}] (A) to (B);
\draw[arrows={-stealth},very thick,blue,transform canvas={xshift=+2pt}] (B) to (A);
\draw[arrows={-stealth},very thick,blue] (B) to (C);
\draw[arrows={-stealth},very thick,blue] (C) to (D);
\draw[arrows={-stealth},very thick,blue,transform canvas={yshift=-3.5pt,xshift=-0.5pt}] (D) to (C);
\draw[arrows={-stealth},very thick,blue] (A) to (E);



\end{scope}

\end{tikzpicture}
\end{center}
and assume mass-action kinetics.
The network has one independent linkage class/sub\-network.
Hence, its deficiency is
$\delta = |V| - 1 - \dim(S)=5-1-2 = 2$, 
where $V$ is the set of vertices and $S$ is the stoichiometric subspace (the linear span of the differences of complexes). 
However, its dependency is $d = |V_s| - 1 - \dim(L) = 4-1-2 = 1$, where $V_s$ is the set of source vertices
and $L$ is the monomial difference subspace (the linear span of the differences of source complexes).  
Moreover, this network has two \blue{absorbing} strong components, one singleton (complex~0) and one non-singleton (with complexes~$X_2$ and~$3\, X_1$). 
Clearly, it does not satisfy the conditions of the deficiency one theorem.
Still, our dependency one \blue{results} for mass-action systems,
Theorem~\ref{thm:main:mass-action} \blue{and Corollary~\ref{cor:main:mass-action},} can be applied.

Technically,
we treat positive equilibria of mass-action systems 
as {\em parametrized} systems of {\em polynomial equations} with {\em classes}
and apply recent fundamental results for such systems~\cite{MuellerRegensburger2023a}.
Specifically, their solution set is essentially the solution set on the {\em coefficient polytope} 
(modulo an exponential fiber involving the {\em monomial difference subspace}~$L$).
\blue{We assume (i) the reaction network can be decomposed into independent subnetworks, (ii) the equation system is {\em decomposable} in the terminology of~\cite{MuellerRegensburger2023a}, (iii) the subsystems (classes) have {\em monomial dependency} one and hence are univariate,
and (iv) $L=S$ (or $L=K$, where $K$ is the kinetic subspace).
Then, there exists a unique positive equilibrium in every stoichiometric (or kinetic) compatibility class.
Note that, even if $K=S$, the subnetworks need not have exactly one absorbing strong component.}

We first provide
a {\em dependency one theorem} for one class, Theorem~\ref{thm:one_class},
using the fundamental result from~\cite{MuellerRegensburger2023a},
Theorem~\ref{thm:previous}.
Next, we extend it to {\em decomposable systems} (with several classes) and obtain Theorem~\ref{thm:main}.
By applying the latter to reaction networks
(and using Birch's theorem),
we arrive at a dependency one theorem for mass-action systems, 
Theorem~\ref{thm:main:mass-action}.
Finally, this allows us to provide a modular proof of the extended deficiency one theorem, Theorem~\ref{thm:def}, and hence of the classical deficiency one theorem.

\begin{center}
\begin{tabular}{ccc}
\blue{\underline{Dependency} ($d$) and \underline{deficiency} ($\delta$) results:} \\ \\
Theorem~\ref{thm:one_class} ($d=1$, one class)
& $\leftarrow$ & Theorem~\ref{thm:previous} \\
$\downarrow$ \\
Theorem~\ref{thm:main} ($d_j\le1$, decomposable systems) \\
$\downarrow$ \\
Theorem~\ref{thm:main:mass-action} ($d_j\le1$, mass-action systems) 
& $\leftarrow$ & Theorem~\ref{thm:birch} (``Birch'') \\
$\downarrow$ \\
\blue{Corollary~\ref{cor:main:mass-action} ($d_j\le1$, mass-action, simplified)} \\
$\downarrow$ \\
Theorem~\ref{thm:def} ($\delta_j\le1$, independent subnetworks) 
& $\leftarrow$ & \blue{Lemma~\ref{lem:salt} (``Second salt thm'')} \\
$\downarrow$ \\
\blue{``Classical'' theorem ($\delta_j\le1$, linkage classes)} \\
\end{tabular}
\end{center}


\subsubsection*{Organization of the work}

In Section~\ref{sec:previous}, we recall the relevant geometric objects corresponding to parametrized systems of generalized polynomial equations (with classes) including the coefficient polytope and the monomial dependency and difference subspaces. 
In Section~\ref{sec:dependency_one}, we provide sufficient conditions for the unique existence of a solution on the coefficient polytope,
along with an equivalent condition for its existence. 
In Section~\ref{sec:reaction_networks},
we introduce basic notions for reaction networks with mass-action kinetics,
and in Section~\ref{sec:network_decomposition},
we decompose a reaction network into subnetworks (as a pre-processing step)
and finally into independent subnetworks.
In Section~\ref{sec:main}, we present our main result, the dependency one theorem for mass-action systems,
and in Section~\ref{sec:deficiency_one}, we derive the extended deficiency one theorem as a special case.
Finally, in Section~\ref{sec:examples}, we provide three examples
where the conditions of the deficiency one theorem are not satisfied,
but the dependency one theorem can be applied (to conclude the unique existence of a positive equilibrium within every stoichiometric compatibility class for some or all rate constants).

\subsubsection*{Notation}
For vectors $x, y \in \R^n$,
we denote their scalar product by $x \cdot y \in \R$
and their component-wise (Hadamard) product by $x \circ y \in \R^n$. 
We denote the vector with all entries equal to one by $\ones_n \in \R^n$
and the identity matrix by $\id_n \in \R^{n \times n}$.

We denote the positive (non-negative) real numbers by $\R_>$ $(\R_\ge)$.
For $x \in \R^n_>$ and $y \in \R^n$, we define the monomial $x^y = \prod_{i=1}^n (x_i)^{y_i} \in \R_>$;
and for $Y = (y^1,\ldots,y^m) \in \R^{n \times m}$,
we define the vector of monomials $x^Y \in \R^m_>$ via $(x^Y)_j = x^{y^j}$.

For $x \in \R^n$, we define $\e^x  = (e^{x_1},e^{x_2}, \ldots, e^{x_n})^\trans \in \R^n_>$;
and for $x \in \R^n_>$, we define $\ln(x)  = \left( \ln(x_1),\ln(x_2), \ldots, \ln(x_n) \right)^\trans \in \R^n$.



\section{Previous results on \blue{polynomial systems}} \label{sec:previous}

In order to state Theorem~\ref{thm:previous} below
for a parametrized system of generalized polynomial equations,
we introduce geometric objects and auxiliary matrices as defined in~\cite{MuellerRegensburger2023b,MuellerRegensburger2023a}.

\begin{definition} \label{definition}
\blue{For the positive variable vector $x \in \R_>^n$,
a {\em coefficient matrix} $A \in \R^{n' \times m}$,
an {\em exponent matrix} $B \in \R^{n \times m}$,
and a positive {\em parameter vector} $c \in \R_>^m$,
we define the parametrized system of generalized polynomial equations }
\[ A \, (c \circ x^B)=0 . \]
\begin{enumerate}[(i)]
\item \label{def:coeff_cone} 
We call $C = \ker A \cap \R_>^m$ the \emph{coefficient cone}. Its closure $\overline{C} = \ker A \cap \R_\ge^m $ is a polyhedral cone, called an s-cone (subspace cone) in~\cite{MuellerRegensburger2016}.
As a necessary condition for the existence of solutions,
$C$ must be non-empty.
\item
We assume that
$
A = \begin{pmatrix} A_1 & \ldots & A_\ell \end{pmatrix} \in \R^{n' \times m}
$
with $\ell \ge 1$ blocks $A_j \in \R^{n' \times m_j}$ (and hence $m_1+\ldots+m_\ell = m$) 
such that the kernel of $A$
is the direct product of the kernels of $A_j$,
that is, 
$
\ker A = 
\ker A_1 \times \cdots \times \ker A_\ell .
$
Accordingly, 
$
B = \begin{pmatrix} B_1 & \ldots & B_\ell \end{pmatrix} \in \R^{n \times m}
$
with $\ell$ blocks $B_j \in \R^{n \times m_j}$ and 
$
c^\trans = \begin{pmatrix} (c^1)^\trans & \ldots & (c^\ell)^\trans \end{pmatrix} \in \R^m_>
$ 
with $c^j \in \R^{m_j}_>$.

The decomposition of $\ker A$ induces a partition of the indices $\{1,\ldots,m\}$ into $\ell$ {\em classes}.
In particular, the columns of $B=(b^1,\ldots,b^m)$ 
and hence the monomials $x^{b^j}$, $j=1,\ldots,m$
are partitioned into classes.
\item 
We introduce the direct product
$
\Delta = \Delta^{m_1-1} \times \cdots \times \Delta^{m_\ell-1} 
$
of the standard simplices
$
\Delta^{m_j-1} = \{ y \in \R^{m_j}_\ge \mid \ones_{m_j} \cdot y = 1 \} 
$
and define the bounded set
$
P = C \cap \Delta .
$

Clearly,
$
P = P_1 \times \cdots \times P_\ell
$
with
$
P_j = C_j \cap \Delta^{m_j-1} .
$

We call $P$ the {\em coefficient polytope}. In fact, $P$ is a polytope without boundary. Strictly speaking, only its closure $\overline P$ is a polytope.
\item
Let 
$I_m = \begin{pmatrix} \id_{m-1} \\ -\ones_{m-1}^\trans \end{pmatrix} \in \R^{m \times (m-1)}$,
which can be seen as the incidence matrix of a star-shaped graph with vertices $\{1,\ldots,m\}$ and root $m$. 
We introduce the $\ell \times \ell$ block-diagonal (incidence) matrix
\begin{equation*}
I = \begin{pmatrix} I_{m_1} & & 0 \\ & \ddots & \\ 0 & & I_{m_\ell} \end{pmatrix} \in \R^{m \times (m-\ell)}
\end{equation*}
with blocks $I_{m_j} \in \R^{m_j \times (m_j-1)}$
and the ``monomial difference'' matrix
$
M = B \, I \in \R^{n\times(m-\ell)} .
$
Clearly, 
$
M = \begin{pmatrix} B_1 I_{m_1} & \ldots &  B_\ell I_{m_\ell} \end{pmatrix}
$
is generated by taking the differences between the first $m_j-1$ columns of $B_j$ and its last column, for $j=1,\ldots,\ell$,
and hence
$
L = \im M \subseteq \R^n
$
is the sum of the linear subspaces associated with the affine spans of the columns of $B$ in the $\ell$ classes.

We call $L$ the {\em monomial difference subspace}.
Further, we call
$
d = \dim (\ker M)
$
the {\em monomial dependency}.
It can be determined as
$
d = m-\ell-\dim L ,
$
cf.~\cite[Proposition~1]{MuellerRegensburger2023a}.

\item
We introduce the $\ell \times \ell$ block-diagonal ``Cayley'' matrix
\begin{equation*}
J = \begin{pmatrix} \ones_{m_1}^\trans & & 0 \\ & \ddots & \\ 0 & & \ones_{m_\ell}^\trans \end{pmatrix} \in \R^{\ell \times m}
\end{equation*}
with blocks $\ones_{m_j}^\trans \in \R^{1 \times m_j}$
and the matrix 
$
\mathcal{B} = \begin{pmatrix} B \\ J \end{pmatrix} \in \R^{(n+\ell) \times m} .
$

We call 
$
D = \ker \mathcal{B} 
\subset \R^m
$
the {\em monomial dependency subspace}.
It records affine dependencies between the columns of $B$ within the $\ell$ classes.

In fact, $\dim D = d$,
cf.~\cite[Lemma~4]{MuellerRegensburger2023a}.
\item 
Finally, we introduce the ``exponentiation'' matrix $E = I M^* \in \R^{m \times n}$, where $M^* \in \R^{(m-\ell) \times n}$ is a generalized inverse of $M$.
\end{enumerate}
\end{definition}

We can now state the main result of our previous work 
\blue{on parametrized systems of polynomial inequalities~\cite{MuellerRegensburger2023a}, instantiated for equations, cf.~\cite{MuellerRegensburger2023b}.}

\begin{theorem}[\cite{MuellerRegensburger2023b}, Theorem 1] \label{thm:previous}
Consider the parametrized system of generalized polynomial equations $A \, (c \circ x^B) = 0$. The solution set $Z_c = \{ x \in \R^n_> \mid A \, (c \circ x^B) = 0 \}$ can be written as
\[
Z_c = \{ (y\, \circ\, c^{-1})^E \mid y \in Y_c \} \circ \e^{L^\perp} ,
\]
where
\[
Y_c = \{y \in P \mid y^z = c^z \text{ for all } z\in D\} 
\]
is the solution set on the coefficient polytope $P$.
\end{theorem}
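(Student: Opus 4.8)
The plan is to rewrite the polynomial system as a membership condition in the coefficient cone, normalize class-wise, pass to logarithmic coordinates, and there solve a linear system whose particular solution is encoded by the exponentiation matrix $E$ and whose solution set is a coset of $L^\perp$.

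First I would note that $c\circ x^B\in\R^m_>$ for every $x\in\R^n_>$, so $A\,(c\circ x^B)=0$ is equivalent to $c\circ x^B\in C=\ker A\cap\R^m_>$. Since $C=C_1\times\cdots\times C_\ell$ with each $C_j=\ker A_j\cap\R^{m_j}_>$ invariant under multiplication by positive scalars, and since every positive ray meets $\Delta_{m_j-1}$ in a single point, this is in turn equivalent to the class-wise normalized vector $y=y(x)$, defined blockwise by $y^j=(c^j\circ x^{B_j})/\bigl(\ones_{m_j}\cdot(c^j\circ x^{B_j})\bigr)$, lying in $P=C\cap\Delta$. Hence $x\in Z_c$ if and only if $y(x)\in P$.

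Next I would pass to $u=\ln x$. With $\mu_j=\ln\bigl(\ones_{m_j}\cdot(c^j\circ x^{B_j})\bigr)$ and $\mu=(\mu_1,\dots,\mu_\ell)^\trans$, the definition of $y(x)$ reads $\ln(y\circ c^{-1})=B^\trans u-J^\trans\mu$. Applying $I^\trans$ and using $I^\trans J^\trans=0$ (blockwise $I_{m_j}^\trans\ones_{m_j}=0$) yields the linear identity $M^\trans\ln x=I^\trans\ln(y\circ c^{-1})$ with $M=BI$. Moreover $\ln(y\circ c^{-1})\in\im B^\trans+\im J^\trans=\im\mathcal B^\trans=D^\perp$, which is precisely the condition $y^z=c^z$ for all $z\in D$; together with $y\in P$ this shows $y(x)\in Y_c$. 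Conversely, I would fix $y\in Y_c$ and describe its fiber: if $\ln x$ satisfies $M^\trans\ln x=I^\trans\ln(y\circ c^{-1})$, then $I^\trans\bigl(B^\trans\ln x-\ln(y\circ c^{-1})\bigr)=0$, and since $\ker I^\trans=\im J^\trans$ (a vector is annihilated by $I^\trans$ iff it is constant on each class) one obtains $c^j\circ x^{B_j}=\e^{\lambda_j}y^j$ for suitable $\lambda_j$; summing components and using $\ones_{m_j}\cdot y^j=1$ pins down $\e^{\lambda_j}$ and shows $y(x)=y\in P\subseteq C$, hence $x\in Z_c$. Therefore the fiber of $y$ is exactly the set of $x$ with $M^\trans\ln x=I^\trans\ln(y\circ c^{-1})$.

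It then remains to solve this linear system in $\ln x$. It is solvable precisely because $y\in Y_c$: writing $\ln(y\circ c^{-1})=B^\trans p+J^\trans q$ gives $I^\trans\ln(y\circ c^{-1})=M^\trans p\in\im M^\trans$. Its solution set is $(M^*)^\trans I^\trans\ln(y\circ c^{-1})+\ker M^\trans$, where the indicated vector is a genuine solution because $M^\trans(M^*)^\trans M^\trans=(MM^*M)^\trans=M^\trans$, and where $\ker M^\trans=(\im M)^\perp=L^\perp$. Since $(M^*)^\trans I^\trans\ln(y\circ c^{-1})=E^\trans\ln(y\circ c^{-1})=\ln\bigl((y\circ c^{-1})^E\bigr)$ for $E=IM^*$, exponentiating shows that the fiber of $y$ equals $(y\circ c^{-1})^E\circ\e^{L^\perp}$; taking the union over $y\in Y_c$ yields the claim. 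The step I expect to be the main obstacle — and the conceptual core — is the bookkeeping among $I$, $J$, $M$, $\mathcal B$, $E$: one must recognize that $y$ lying in $Y_c$ (rather than merely in $P$) is exactly the solvability condition for $M^\trans\ln x=I^\trans\ln(y\circ c^{-1})$, and check that the specific combination $E=IM^*$ formed from a generalized inverse yields an honest particular solution. A minor technical point is to keep $P$ and $Y_c$ free of boundary, so that $y(x)\in\R^m_>$ and all logarithms are defined; independence of the fiber from the chosen representative of $L^\perp$ is then immediate.
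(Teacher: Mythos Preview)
Your argument is correct and carefully executed; the bookkeeping among $I$, $J$, $M$, $\mathcal B$, and $E$ is exactly right, including the key identifications $\ker I^\trans=\im J^\trans$, $\im\mathcal B^\trans=D^\perp$, and the use of $MM^*M=M$ to verify that $E^\trans\ln(y\circ c^{-1})$ is a particular solution. Note, however, that the present paper does not give its own proof of this statement: Theorem~\ref{thm:previous} is quoted from~\cite{MuellerRegensburger2023b} and invoked as a black box, so there is no in-paper proof to compare against. Your write-up is essentially the standard derivation one would expect for this result (and is consistent with the framework of~\cite{MuellerRegensburger2023a,MuellerRegensburger2023b}); it would serve well as a self-contained proof here.
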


Theorem~\ref{thm:previous} can be read as follows: 
In order to determine the solution set~$Z_c$, 
first determine the solution set on the coefficient polytope, $Y_c$.
Recall that the coefficient polytope $P$ is determined by the coefficient matrix $A$,
and the dependency subspace $D$ is determined by the exponent matrix $B$
(and the classes).
To a solution $y \in Y_c$,
there corresponds the actual solution $x = (y \circ c^{-1})^E \in Z_c$. 
In fact, if (and only if) $\dim L < n$, 
then $y \in Y_c$ corresponds to an exponential manifold of solutions, $x \circ \e^{L^\perp} \subseteq Z_c$.
Strictly speaking, existence of a unique solution corresponds to $|Y_c|=1$ and $\dim L = n$
(that is, $L^\perp = \{0\}$).

\begin{theorem}[``Birch's theorem''] \label{thm:birch}
Let $x_0,x^*\in\R^n_>$ and let $S\subseteq \R^n$ be a subspace. Then we have
\[
|(x_0 + S) \cap (x^*\circ S^{\perp})|=1 .
\]
\end{theorem}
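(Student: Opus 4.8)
The plan is to reduce the statement to the classical strictly-convex-function argument by taking logarithms. Write $n = \dim S$ is not needed; instead I would parametrize the affine set $x_0 + S$ and the "exponential fiber" $x^* \circ S^\perp = \{x^* \circ \e^u \mid u \in S^\perp\}$, and show the intersection consists of exactly one point. Concretely, a point of the intersection is an $x \in \R^n_>$ with $x \in x_0 + S$ and $\ln(x) - \ln(x^*) \in S^\perp$. So I would first establish \emph{existence}, then \emph{uniqueness}.

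For uniqueness, suppose $x, x' \in \R^n_>$ both lie in the intersection. Then $x - x' \in S$ (difference of two elements of $x_0 + S$) while $\ln(x) - \ln(x') \in S^\perp$ (difference of two elements of $\ln(x^*) + S^\perp$). Hence $(x - x') \cdot (\ln(x) - \ln(x')) = 0$. But $t \mapsto \ln t$ is strictly increasing, so each summand $(x_i - x_i')(\ln x_i - \ln x_i')$ is $\ge 0$, and is $0$ only if $x_i = x_i'$. Therefore $x = x'$.

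For existence, I would use a variational argument: consider the strictly convex function $g(s) = \sum_{i=1}^n \big( \e^{s_i}(\ln x^*_i - s_i) + \e^{s_i} \big)$... more cleanly, set $h : S^\perp \to \R$, $h(u) = \sum_{i=1}^n \big( (x^*_i \e^{u_i}) - (x^*_i \e^{u_i}) u_i \cdot 0 \big)$ — rather, take the convex function $\Phi(x) = \sum_i \big( x_i \ln(x_i/x^*_i) - x_i \big)$ restricted to the affine set $(x_0 + S) \cap \R^n_>$. Its gradient is $\ln(x) - \ln(x^*)$, so a critical point of $\Phi$ on $x_0 + S$ is exactly a point where $\ln(x) - \ln(x^*) \perp S$, i.e.\ a point of the desired intersection. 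Since $\Phi$ is strictly convex and its sublevel sets on $(x_0+S)\cap\R^n_>$ are bounded with the function tending to $+\infty$ at the relative boundary (where some $x_i \to 0$) and at infinity, a minimizer exists in the relative interior, giving the required point. I would verify the coercivity/boundedness claim by a short estimate using that on $x_0 + S$ the coordinates that can blow up are controlled by those that must then compensate.

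The main obstacle is the existence part — specifically, justifying that the minimum of $\Phi$ over $(x_0 + S) \cap \R^n_>$ is attained in the open set and not "escaping to the boundary" where some coordinate vanishes or to infinity. This requires a clean coercivity argument on the affine slice. A convenient alternative that sidesteps the analysis is to invoke the surjectivity of the map $S^\perp \to S^\perp$, $u \mapsto \Pi_{S^\perp}\!\big(x^* \circ \e^u\big)$... but the most economical route, if permitted, is simply to cite that this is the standard Birch's theorem (Birch~\cite{Birch1963}); since the paper labels it as such, I would present the logarithmic uniqueness argument in full and give existence either by the convex-optimization argument sketched above or by reference. If a self-contained proof is wanted, I expect the coercivity estimate to be the one genuinely technical lemma needed.
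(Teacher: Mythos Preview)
The paper does not actually prove Theorem~\ref{thm:birch}; it merely states the result and cites Birch~\cite{birch1963maximum} and Horn--Jackson~\cite[Lemma~4B]{HornJackson1972}. So there is nothing to compare against, and your offer to ``give existence \ldots\ by reference'' is exactly what the paper does.

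That said, since you sketch a self-contained argument, one point deserves correction. Your uniqueness argument is clean and correct. In the existence part, however, the claim that the entropy-like function
\[
\Phi(x)=\sum_i \big(x_i\ln(x_i/x^*_i)-x_i\big)
\]
tends to $+\infty$ at the relative boundary (where some $x_i\to 0$) is \emph{false}: since $t\ln t\to 0$ as $t\to 0^+$, each term extends continuously to $0$ there. So $\Phi$ does not blow up on the boundary of $\R^n_\ge$, and the coercivity argument as you phrased it does not go through. You correctly flag coercivity as the delicate step, but the specific mechanism you propose is wrong.

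Two standard repairs: (a) Keep $\Phi$, extend it continuously to $(x_0+S)\cap\R^n_\ge$, use that $\Phi\to+\infty$ as $\|x\|\to\infty$ to get a minimizer $\bar x$ on the closure, and then rule out $\bar x_i=0$ by moving toward $x_0$ along $v=x_0-\bar x\in S$: the directional derivative picks up a term $v_i\,\partial_i\Phi(\bar x)=({x_0})_i\cdot(-\infty)=-\infty$, contradicting minimality. (b) Dualize: minimize the strictly convex $\Psi(u)=\sum_i x^*_i\e^{u_i}-x_0\cdot u$ over $u\in S^\perp$; here genuine coercivity holds (if some $u_i>0$ the exponential dominates, if all $u_i\le 0$ then $-x_0\cdot u\to+\infty$), and a critical point gives $x^*\circ\e^u-x_0\in S$, which is exactly the intersection point.
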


Theorem~\ref{thm:birch} was originally proved by Birch in 1963~\cite{birch1963maximum}
and reproved by Horn and Jackson~\cite[Lemma 4B]{HornJackson1972} 
in the context of reaction networks with mass-action kinetics.
Motivated by applications, Birch's theorem has been extended to cover generalized mass-action kinetics~\cite{MuellerHofbauerRegensburger2019,CraciunMuellerPanteaYu2019}. 


\blue{
\begin{running}
We consider a parametrized system of two non-overlapping trinomials in two variables,
\begin{alignat*}{3}
0 &= + \, k_{12} - k_{21} \, x_1 - k_{43} \, x_1^3 &&  \\
0 &= && + k_{45} \, x_1^3 + k_{56} \, x_1^2 x_2 - k_{65} \, x_1 x_2^2 ,
\end{alignat*}
arising from a reaction network.%
\footnote{
The reaction network (with rate constants)
\begin{gather*}
    0 \xleftrightarrows{k_{12}}{k_{21}} X_1 , \quad
    2X_1 \xleftarrow[k_{43}]{} 3 X_1 \xrightarrow{k_{45}} 2X_1 + X_2 \xleftrightarrows{k_{56}}{k_{65}} X_1 + 2X_2 
\end{gather*}
defines a mass-action system with associated ODE
\begin{alignat*}{3}
\dd{x_1}{t} &= +\, k_{12} - k_{21} \, x_1 - k_{43} \, x_1^3 &&- k_{45} \, x_1^3 - k_{56} \, x_1^2 x_2 + k_{65} \, x_1 x_2^2 , \\
\dd{x_2}{t} &= &&+ k_{45} \, x_1^3 + k_{56} \, x_1^2 x_2 - k_{65} \, x_1 x_2^2 .
\end{alignat*}
Reaction networks are formally introduced in Section~\ref{sec:reaction_networks},
where we will continue the example.
}
Equivalently,
\[A \left( c\circ x^B \right) = 0 , \]
where
\begin{gather*}
A= 
\begin{pmatrix} 
1 & -1 & -1 & 0 & 0 &  0 \\
0 & 0 &  0 & 1 & 1 & -1 
\end{pmatrix} , \\
B = 
\begin{pmatrix} 
0 & 1 & 3 & 3 & 2 & 1 \\ 
0 & 0 & 0 & 0 & 1 & 2  
\end{pmatrix} , \\
c = (k_{12}, k_{21}, k_{43}, k_{45}, k_{56}, k_{65})^\trans .
\end{gather*}

Clearly, the coefficient matrix $A = \begin{pmatrix} A_1 & A_2 \end{pmatrix} \in \R^{2 \times 6}$ has two blocks $A_1, A_2 \in \R^{2 \times 3}$ such that $\ker A = \ker A_1 \times \ker A_2$.
Accordingly, the exponent matrix $B = \begin{pmatrix} B_1 & B_2 \end{pmatrix}$ has two blocks 
\[
B_1 = 
\begin{pmatrix} 
0 & 1 & 3\\ 
0 & 0 & 0   
\end{pmatrix} 
\quad \text{and} \quad
B_2 = 
\begin{pmatrix} 
3 & 2 & 1 \\ 
0 & 1 & 2  
\end{pmatrix} ,
\]
that is,
there are two {\em classes} of monomials, $1$, $x_1$, $x_1^3$ and $x_1^3$, $x_1^2 x_2$, $x_1 x_2^2$, respectively.
Further, $c = \binom{c_1}{c_2}$ with $c_1 = (k_{12}, k_{21}, k_{43})^\trans$ and $c_2 = (k_{45}, k_{56}, k_{65})^\trans$.

For the coefficient cone $C = \ker A \cap \R^6_>$, we find $C = C_1 \times C_2$ with $C_j = \ker A_j \cap \R^3_>$,
and for the coefficient polytope $P = C \cap \Delta$,
where $\Delta = \Delta^2 \times \Delta^2$ is a product of the standard simplices  $\Delta^2 = \{ y \in \R^3_\ge \mid \ones_3 \cdot y = 1 \}$,
we find
\[ P = P_1 \times P_2 \]
with $P_j = \ker A_j \cap \R^3_> \cap \Delta^2$ and $\dim P_1 = \dim P_2 = 1$.

By appending a Cayley matrix (rows of ones corresponding to the two blocks) to the exponent matrix $B$, we obtain
\[
\mathcal{B} = 
\begin{pmatrix}
0 & 1 & 3 & 3 & 2 & 1 \\ 
0 & 0 & 0 & 0 & 1 & 2 \\
1 & 1 & 1 & 0 & 0 & 0 \\
0 & 0 & 0 & 1 & 1 & 1  
\end{pmatrix} ,
\]
which allows to determine the monomial dependency subspace $D = \ker \mathcal{B}$ and the dependency $d = \dim D$.
In particular, $d = \dim(D) = 2$.
In this example, 
the polynomial system is {\em decomposable}, that is, $D = D_1 \times D_2$ with $D_1 = \im \begin{pmatrix}
2 & -3 & 1 \end{pmatrix}^\trans$ and $D_2 = \im \begin{pmatrix}
1 & -2 & 1 \end{pmatrix}^\trans$ corresponding to the decomposition $\ker A = \ker A_1 \times \ker A_2$.
Clearly, $d_1 = \dim D_1 = d_2 = \dim D_2 = 1$ and hence $d = d_1 + d_2$.

Finally, we obtain the monomial difference matrix $M$ from $B = \begin{pmatrix} B_1 & B_2 \end{pmatrix}$ by taking the differences of the first two columns and the last column within the two blocks,  
\[
M = 
\begin{pmatrix} 
-3 & -2 &  2 &  1 \\ 
 0 &  0 & -2 & -1 
\end{pmatrix} .
\]
This allows to determine the monomial difference subspace $L = \im M$.
In fact, $L = \R^2$.
\end{running}
}

\section{Dependency one systems} \label{sec:dependency_one}

First, we consider systems with one class,
second, we consider decomposable systems.

\subsection{One class}

For one class, we consider $d=\dim P =1$. (The case $d=\dim P =0$ is trivial.)

\begin{definition} \label{definition_one_class}
For a parametrized system of generalized polynomial equations $A \, ( c \circ x^B ) = 0$ 
with 
$\ker A \cap \R^m_> \neq \emptyset$,
one class, one-dimensional coefficient polytope, and monomial dependency one,
let $y^1, y^2 \in {(\ker A \cap \R^m_\ge)}$ be the two vertices of the coefficient polytope,
let $q = (y^1 - y^2) \circ (y^1 + y^2)^{-1} \in \R^m$,
and assume that $1 = q_1 \ge \cdots \ge q_m = -1$
(after reordering of the index set $\{1,\ldots,m\}$).
Further,
let $I_1, \ldots, I_\omega \subset \{1, \ldots,m\}$ be $\omega$ equivalence classes
corresponding to equal (consecutive) components of~$q$,
and let $\bar q \in \R^\omega$ with $\bar q_i = q_{i'}$ for $i' \in I_i$
be the vector of different~$q$'s.
Finally,
let $b \in \R^m$ with $\im b = \ker (\mathcal{B})$,
and let $\bar b \in \R^\omega$ with $\bar b_i = \sum_{i' \in I_i} b_{i'}$
be the vector of lumped~$b$'s.
\end{definition}

\begin{theorem}[$d=1$, one class] \label{thm:one_class}
Let $A \, ( c \circ x^B ) = 0$ be a parametrized system of generalized polynomial equations 
with
$\ker A \cap \R^m_> \neq \emptyset$, 
one class, one-dimensional coefficient polytope ($\dim P = 1$), and monomial dependency one ($d=1$).
Then, 
$|Y_c| = 1$ for all~$c$
if
\begin{center}
$\displaystyle \sum_{i'=1}^{i} \bar b_{i'} \ge 0$  
for all $i=1,\ldots,\omega-1$
(or ``$\le0$'' for all $i$)
and $\bar b_1 \cdot \bar b_\omega < 0$.
\end{center}
\end{theorem}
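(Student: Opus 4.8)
The plan is to reduce the claim $|Y_c|=1$ to a statement about a single scalar equation on the one-dimensional polytope $P$, and then to solve that equation by a monotonicity argument using the sign pattern of the partial sums of $\tilde b$. By Theorem~\ref{thm:previous}, since we have one class, $Y_c = \{ y \in P \mid y^z = c^z \text{ for all } z \in D \}$, and $\dim D = d = 1$, so $D = \im b$ for the vector $b$ of Definition~\ref{definition_one_class}. Hence the defining condition on $y \in P$ collapses to the single equation $y^b = c^b$. Since $\dim P = 1$ and $\overline P$ is a segment with vertices $y^1, y^2$, I would parametrize $\overline P$ by $y(t) = \tfrac{1+t}{2} y^1 + \tfrac{1-t}{2} y^2$ for $t \in [-1,1]$, so that $P$ corresponds to the open interval $t \in (-1,1)$. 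The key computation is to write $y_{i'}(t)$ in terms of $q_{i'}$: a short calculation gives $y_{i'}(t) = \tfrac{1}{2}(y^1_{i'}+y^2_{i'})(1 + t\, q_{i'})$ using $q_{i'} = (y^1_{i'}-y^2_{i'})/(y^1_{i'}+y^2_{i'})$. Taking logarithms, the equation $y(t)^b = c^b$ becomes
\[
f(t) := \sum_{i'=1}^m b_{i'} \ln\!\big(1 + t\, q_{i'}\big) = \text{const},
\]
where the constant absorbs $\ln(c^b)$ and the $t$-independent factors $\tfrac12(y^1_{i'}+y^2_{i'})$; note $b \in \ker\mathcal B$ means $b \cdot \ones_{\text{block}} = 0$, which is exactly what kills those constant factors so that the equation is genuinely of the form $f(t) = \gamma$ for a $c$-dependent $\gamma \in \R$.

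Next I would group terms by the equivalence classes $I_1,\dots,I_\omega$ of equal $q$-values, obtaining $f(t) = \sum_{i=1}^\omega \tilde b_i \ln(1 + t\,\tilde q_i)$ with $1 = \tilde q_1 > \cdots > \tilde q_\omega = -1$. The goal is then: for every $\gamma \in \R$ there is exactly one $t \in (-1,1)$ with $f(t) = \gamma$. The natural route is to show $f$ is strictly monotone on $(-1,1)$ and surjective onto $\R$. Differentiating,
\[
f'(t) = \sum_{i=1}^\omega \frac{\tilde b_i\, \tilde q_i}{1 + t\,\tilde q_i}.
\]
Here is where the hypothesis $\sum_{i'=1}^i \tilde b_{i'} \ge 0$ for $i = 1,\dots,\omega-1$ (together with $\tilde b_1 \tilde b_\omega < 0$, which forces $\tilde b_1 > 0$ and $\tilde b_\omega < 0$ in the ``$\ge 0$'' case, and also $\sum_{i'=1}^\omega \tilde b_{i'} $ small/controlled) should enter: I expect to use an Abel summation (summation by parts) on $\sum_i \tilde b_i g_i$ with $g_i = \tilde q_i/(1 + t\tilde q_i)$, rewriting it in terms of the partial sums $\sigma_i = \sum_{i'\le i}\tilde b_{i'}$ and the increments $g_i - g_{i+1}$. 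Since $s \mapsto s/(1+ts)$ is strictly increasing in $s$ for $t \in (-1,1)$ (its derivative is $1/(1+ts)^2 > 0$), the increments $g_i - g_{i+1}$ are all of one sign, so nonnegativity of all $\sigma_i$ yields a definite sign for $f'(t)$, and $\tilde b_1\tilde b_\omega<0$ rules out degenerate vanishing, giving strict monotonicity. For surjectivity, I would check the boundary behavior: as $t \to 1^-$ the term $\tilde b_1 \ln(1+t\tilde q_1) = \tilde b_1 \ln(1+t) \to +\infty\cdot\sign(\tilde b_1)$ dominates (the other $\ln(1+t\tilde q_i)$ stay bounded since $\tilde q_i < 1$), and similarly as $t \to -1^+$ the term $\tilde b_\omega \ln(1 - t) \to +\infty \cdot \sign(-\tilde b_\omega)$; since $\tilde b_1 > 0$ and $\tilde b_\omega < 0$ the two ends of $f$ go to $-\infty$ and $+\infty$ (consistently with the monotonicity direction), so $f$ is a bijection $(-1,1) \to \R$ and $f(t)=\gamma$ has a unique solution for every $c$.

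The main obstacle I anticipate is the sign bookkeeping in the Abel-summation step: one has to be careful that the direction of monotonicity of $s\mapsto s/(1+ts)$ and the ordering $\tilde q_1 > \cdots > \tilde q_\omega$ combine with $\sigma_i \ge 0$ to give a single uniform sign for $f'$ on the whole open interval, and to confirm that the strict inequality (not merely $f' \ge 0$) holds — this is exactly where $\tilde b_1\tilde b_\omega < 0$ is needed, ensuring the boundary increments $g_0 - g_1$ and $g_{\omega-1} - g_\omega$ contribute strictly. A secondary point to handle cleanly is the translation between ``$y^b = c^b$ for $b$ with $\im b = \ker\mathcal B$'' and the scalar equation $f(t) = \gamma$, i.e.\ verifying that $b \in \ker\mathcal B$ (so block-sums of $b$ vanish) is what makes the constant prefactors $\tfrac12(y^1+y^2)$ drop out and also guarantees $\sum_i \tilde b_i = \sum_{i'} b_{i'} = 0$, which is consistent with (and in the ``$\ge 0$'' case sharpens) the partial-sum hypotheses. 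Once these sign issues are pinned down, the conclusion $|Y_c| = 1$ for all $c$ is immediate from the bijectivity of $f$.
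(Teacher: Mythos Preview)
Your approach is essentially the paper's: parametrize $P$ by $t\in(-1,1)$, reduce $y^b=c^b$ to a scalar equation in $t$, apply Abel summation to $f'(t)$ using the strict monotonicity of $s\mapsto s/(1+ts)$ together with the partial-sum hypothesis (and $\sigma_\omega=\sum_i\tilde b_i=0$ from $b\in\ker\mathcal B$) for uniqueness, and check boundary limits for existence. One slip to fix: you have the endpoints backward in the surjectivity check---as $t\to 1^-$ it is $\ln(1+t\tilde q_\omega)=\ln(1-t)$ that diverges while $\ln(1+t)\to\ln 2$, and symmetrically at $t\to -1^+$; with this corrected the limits are indeed $\pm\infty$ and the argument goes through.
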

\begin{proof} 
Let $\hat{y} = \frac{y^1 - y^2}{2}$, $\bar{y} = \frac{y^1 + y^2}{2} > 0$, and hence $q = \hat{y}\circ {\bar{y}}^{-1}$. 
Every $y \in P$ can be written as $y = \bar{y} + t \hat{y}$ with $t \in (-1,1)$,
and the binomial condition $y^{b} = c^b$ for $y \in P$ 
can be written as $(\bar{y} + t \hat{y})^b = c^b$ for $t \in (-1,1)$
or, after division by $\bar y$, as $f(t) := (1 + t q)^b = c^b \, \bar{y}^{-b} =: c^*$. 
After considering equal components of $q$,
\[ 
f(t) = \prod_{i=1}^m (1+tq_i)^{b_i} = \prod_{i=1}^{\omega} (1+t\bar q_i)^{\bar b_i} .
\]
Now, let $\bar b_1 \cdot \bar b_\omega < 0$, in particular, $\bar b_1 > 0$ and $\bar b_\omega < 0$.
(The other case is analogous.)
Clearly, $\bar q_1 = 1$ implies $f(-1) \to 0$, and $\bar q_\omega = -1$ implies $f(1) \to \infty$. 
By continuity, there is a solution to $f(t) = c^*$ for all $c^*$ and hence for all~$c$.
That is, $|Y_c|\ge 1$ for all~$c$.

Moreover, $f'(t) = f(t) \, h(t)$ with
\[ 
h(t) = \sum_{i=1}^\omega \bar b_i \, \frac{\bar q_i}{1 + t \bar q_i} 
= \sum_{i=1}^{\omega-1} \left( \sum_{i'=1}^i \bar b_{i'} \right) 
\underbrace{\left( \frac{\bar q_i}{1 + t \bar q_i} - \frac{\bar q_{i+1}}{1 + t \bar q_{i+1}} \right)}_{>0} .
\]

Now, also let $\sum_{i'=1}^{i} \bar b_{i'} \ge 0$
for all $i=1,\ldots,\omega-1$
(or ``$\le0$'' for all $i$).
Altogether, this implies $f'(t)> 0$ (or $f'(t)< 0$).
That is, $|Y_c| \le 1$ for all~$c$.
\end{proof}

Recently, \blue{the existence of a unique, nondegenerate} solution to a parametrized system of generalized polynomial equations (with arbitrary dependency and using one class) 
\blue{for all parameters)}
has been characterized using Hadamard's Global Inversion Theorem~\cite{DeshpandeMueller2024}.

\blue{Notably,}
existence (without uniqueness) \blue{of a solution} on the coefficient polytope for all parameters can be characterized.

\begin{theorem}\label{thm:existence}
Let $A \, ( c \circ x^B ) = 0$ be a parametrized system of generalized polynomial equations 
with
$\ker A \cap \R^m_> \neq \emptyset$, 
one class, one-dimensional coefficient polytope, and monomial dependency one. 
The following statements are equivalent:
\begin{enumerate}[1.]
\item $|Y_c| \ge 1$ for all~$c$.
\item $\bar b_1\cdot \bar b_\omega <0$.
\end{enumerate}
\end{theorem}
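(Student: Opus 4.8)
The plan is to reduce the claim, exactly as in the proof of Theorem~\ref{thm:one_class}, to a statement about the single real function $f$. Recall that writing $y = \bar y + t\hat y$ with $t\in(-1,1)$ identifies $Y_c$ with the set of solutions $t\in(-1,1)$ of $f(t)=c^*$, where $f(t)=\prod_{i=1}^{\omega}(1+t\tilde q_i)^{\tilde b_i}$ and $c^* = c^b\,\bar{y}^{-b}$. Since the monomial dependency is one, $b$ spans the one-dimensional space $\ker\mathcal B$, so $b\neq 0$; hence, as $c$ ranges over $\R^m_>$, the quantity $c^*$ ranges over all of $\R_>$. Therefore statement~1 is equivalent to: $f$ maps $(-1,1)$ onto $\R_>$.

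Next I would analyze the behaviour of $f$ at the two endpoints. Because $\tilde q_1=1$, $\tilde q_\omega=-1$, and $\tilde q_i\in(-1,1)$ for $1<i<\omega$, one can factor
\[
f(t) = (1+t)^{\tilde b_1}\,g_-(t) = (1-t)^{\tilde b_\omega}\,g_+(t),
\]
where $g_-$ (resp.\ $g_+$) is continuous and strictly positive in a neighbourhood of $t=-1$ (resp.\ $t=1$) in $[-1,1]$: the remaining factors $1+t\tilde q_i$ are bounded and bounded away from $0$ there, and the factor at the opposite endpoint evaluates to $2$. Consequently $\lim_{t\to-1^+}f(t)$ equals $0$, a finite positive number, or $+\infty$ according as $\tilde b_1>0$, $\tilde b_1=0$, or $\tilde b_1<0$; symmetrically, $\lim_{t\to 1^-}f(t)$ is governed by the sign of $\tilde b_\omega$. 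Since $f$ is continuous and strictly positive, it attains a positive minimum and a finite maximum on every compact subinterval of $(-1,1)$; hence $\inf_{(-1,1)}f=0$ precisely when $f\to 0$ at one of the endpoints, and $\sup_{(-1,1)}f=+\infty$ precisely when $f\to+\infty$ at one of the endpoints.

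From here I would derive both implications. For ``$2\Rightarrow 1$'', if $\tilde b_1\tilde b_\omega<0$, say $\tilde b_1>0>\tilde b_\omega$ (the other case being symmetric), then $f\to 0$ as $t\to-1^+$ and $f\to+\infty$ as $t\to 1^-$, so $f$ is onto $\R_>$ by the intermediate value theorem --- this is exactly the existence part already contained in the proof of Theorem~\ref{thm:one_class}. For ``$1\Rightarrow 2$'' I argue by contraposition: if $\tilde b_1\tilde b_\omega\ge 0$, then either $\tilde b_1\ge 0$ and $\tilde b_\omega\ge 0$, in which case $f$ tends to $+\infty$ at neither endpoint, so $\sup f<\infty$ and $\im f\neq\R_>$; or $\tilde b_1\le 0$ and $\tilde b_\omega\le 0$, in which case $f$ tends to $0$ at neither endpoint, so $\inf f>0$ and again $\im f\neq\R_>$.

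The argument is essentially routine; the only points needing care are the claim that $c^*$ sweeps all of $\R_>$ (which uses $b\neq 0$, guaranteed by monomial dependency one, together with $\bar y>0$) and the bookkeeping of the boundary limits in the degenerate sub-cases $\tilde b_1=0$, $\tilde b_\omega=0$, or $\tilde b\equiv 0$ (where $f\equiv 1$) --- but all of these are subsumed under the ``$\ge 0$''/``$\le 0$'' cases above, so I do not anticipate a genuine obstacle.
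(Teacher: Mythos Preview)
Your proof is correct and follows essentially the same route as the paper's: both reduce the question to surjectivity of $f(t)=\prod_i(1+t\tilde q_i)^{\tilde b_i}$ onto $\R_>$ and decide this from the endpoint limits governed by $\operatorname{sign}\tilde b_1$ and $\operatorname{sign}\tilde b_\omega$. Your version is a bit more streamlined---you collapse the paper's seven-case enumeration into the two grouped cases $\tilde b_1,\tilde b_\omega\ge 0$ and $\tilde b_1,\tilde b_\omega\le 0$, and you make explicit (via $b\neq 0$) why $c^*$ sweeps all of $\R_>$, a point the paper leaves implicit---but the underlying argument is the same.
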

\begin{proof}
See Appendix~\ref{app:proof_existence}.
\end{proof}


\subsection{Decomposable systems} 

\begin{definition} \label{definition_decomposable systems}
For a parametrized system of generalized polynomial equations $A \, (c \circ x^B) = 0$ with 
$\ker A \cap \R^m_> \neq \emptyset$
and $\ell$ classes, 
let $P_j$ be the coefficient polytope and $d_j$ be the monomial dependency 
of the subsystem $A_j \, (c^j \circ x^{B_j}) = 0$, $j=1,\ldots,\ell$.
If $\dim P_j=1$, let $y^{j,1}, y^{j,2} \in {(\ker A_j \cap \R^{m_j}_\ge)}$ be the two vertices of $P_j$, 
let $q^j = (y^{j,1} - y^{j,2}) \circ (y^{j,1} + y^{j,2})^{-1} \in \R^{m_j}$,
and assume that $1 = q^j_1 \ge q^j_2 \ge \cdots \ge q^j_{m_j} = -1$
(after reordering of the index set $\{1,\ldots,m_j\}$).
Further, let $I^j_1, I^j_2, \ldots, I^j_{\omega_j} \subset \{1, \ldots, m_j\}$ be $\omega_j$ equivalence classes 
corresponding to equal (consecutive) components of~$q^j$.
If $d_j=1$, let $b^j \in \R^{m_j}$ with $\im b^j = \ker (\mathcal{B}_j)$,
and let $\bar b^j \in \R^{\omega_j}$ with $\bar b^j_i = \sum_{i' \in I_i} b^j_{i'}$ be the vector of lumped $b^j$'s.
\end{definition}

\begin{theorem}[$d_j \le 1$] \label{thm:main}
Let $A \, (c \circ x^B) = 0$ be a parametrized system of generalized polynomial equations 
with $\ell$ classes
that fulfills the following conditions:

\begin{enumerate}[(i)]
\item 
$\ker A \cap \R^m_> \neq \emptyset$.
\item 
$d = d_1 + \cdots + d_{\ell}$.
\item 
For every (class) $j=1,\ldots,\ell$,
\begin{itemize}
    \item 
    $d_j = \dim P_j \le 1$. 
    \item 
    If $d_j=1$, then
    \begin{itemize}
        \item 
        $\displaystyle \sum_{i'=1}^{i} \bar b^j_{i'} \ge 0$ for all $i=1,\ldots,\omega_j-1$
        (or ``$\le0$'' for all $i$)
        \item
        $\bar b^j_1 \cdot \bar b^j_{\omega_j} < 0$.
    \end{itemize}
\end{itemize}
\end{enumerate}
Then, $|Y_c| = 1$ for all~$c$.
\end{theorem}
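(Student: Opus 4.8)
The plan is to reduce the statement to the one-class case, Theorem~\ref{thm:one_class}, by showing that under hypotheses (i) and (ii) the solution set $Y_c$ on the coefficient polytope factors as a product over the $\ell$ classes.

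First I would record the structural consequence of hypothesis (ii). Since $\ker A = \ker A_1 \times \cdots \times \ker A_\ell$, the coefficient polytope factors as $P = P_1 \times \cdots \times P_\ell$, as already noted in Section~\ref{sec:previous}. For the dependency subspace, write $\mathcal{B} = \binom{B}{J}$ and observe that a block vector $z = (z^1,\ldots,z^\ell)$ with $z^j \in \R^{m_j}$ lies in $D = \ker \mathcal{B}$ iff $\sum_j B_j z^j = 0$ and $\ones_{m_j}\cdot z^j = 0$ for every~$j$. In particular $D_1 \times \cdots \times D_\ell \subseteq D$, where $D_j = \ker \mathcal{B}_j$ is the dependency subspace of the $j$-th subsystem, and this inclusion has domain of dimension $d_1 + \cdots + d_\ell$ while $\dim D = d$ by~\cite[Lemma~4]{MuellerRegensburger2023a}. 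Hence hypothesis (ii), $d = d_1 + \cdots + d_\ell$, is equivalent to the \emph{equality} $D = D_1 \times \cdots \times D_\ell$; this is the key step.

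Next I would use this factorization to decompose $Y_c$. Because $D$ equals $D_1\times\cdots\times D_\ell$, it is spanned by the block vectors supported in a single class, so the defining conditions $y^z = c^z$ for $z\in D$ split into independent conditions per class (pass to logarithms: the condition holds on a subspace iff on a spanning set). Writing $y = (y^1,\ldots,y^\ell)$ and $c = (c^1,\ldots,c^\ell)$,
\[
Y_c = Y^{(1)}_{c^1} \times \cdots \times Y^{(\ell)}_{c^\ell}, \qquad
Y^{(j)}_{c^j} = \{\, y^j \in P_j \mid (y^j)^{z} = (c^j)^{z} \text{ for all } z \in D_j \,\} ,
\]
so that $|Y_c| = \prod_{j=1}^\ell |Y^{(j)}_{c^j}|$. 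It then suffices to show $|Y^{(j)}_{c^j}| = 1$ for every $j$ and every $c^j$.

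Finally I would treat the two cases per class. From hypothesis (i) and the product structure of $\ker A$, each subsystem has $\ker A_j \cap \R^{m_j}_> \neq \emptyset$, so $P_j$ is a nonempty polytope (without boundary) of dimension $d_j \le 1$ by (iii). If $d_j = 0$, then $\dim P_j = 0$, so $P_j$ is a single point and $D_j = \{0\}$, whence $Y^{(j)}_{c^j} = P_j$ is a singleton for every $c^j$. If $d_j = 1$, then the subsystem $A_j\,(c^j \circ x^{B_j}) = 0$ satisfies exactly the hypotheses of Theorem~\ref{thm:one_class} (one class, $\dim P_j = 1$, $d_j = 1$, and the two sign conditions on $\tilde b^j$ from (iii)), so $|Y^{(j)}_{c^j}| = 1$ for every $c^j$. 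Multiplying over $j$ gives $|Y_c| = 1$ for all~$c$. The only genuinely delicate point is the first step — deducing $D = D_1\times\cdots\times D_\ell$ (not merely $\supseteq$) from the dimension bookkeeping — after which the argument is just the product decomposition of $Y_c$ together with an appeal to Theorem~\ref{thm:one_class}; one should also note that the per-class reorderings in Definition~\ref{definition_decomposable systems} are harmless since each acts within a single class.
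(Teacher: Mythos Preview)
Your argument is correct and follows the same route as the paper's proof: use condition~(ii) to show that the system is decomposable (i.e., $D = D_1 \times \cdots \times D_\ell$), deduce $Y_c = Y^{(1)}_{c^1} \times \cdots \times Y^{(\ell)}_{c^\ell}$, and then handle each factor by the $d_j=0$ case trivially and the $d_j=1$ case via Theorem~\ref{thm:one_class}. The only difference is cosmetic: the paper invokes \cite[Fact~10 and Proposition~10]{MuellerRegensburger2023a} for the two steps you spell out directly (the dimension count giving $D = \prod_j D_j$ and the resulting product form of $Y_c$).
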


\begin{proof}
By (i), the coefficient polytope $P = P_1 \times \cdots \times P_\ell$ is non-empty.
By (ii) and \cite[\blue{Section~3.2}]{MuellerRegensburger2023a}, the system is \blue{{\em decomposable}}.
By \cite[Proposition~10]{MuellerRegensburger2023a},
$Y_c = Y_{c,1} \times \cdots \times Y_{c,{\ell}}$,
that is, the solution set on the coefficient polytope is a direct product.
In particular, \blue{$|Y_c| = |Y_{c,1}| \cdot \, \cdots \, \cdot |Y_{c,\ell}|$.}
By the first item in (iii), $d_j = \dim P_j \le 1$.

Case $d_j = \dim P_j = 0$:
$P_j$ is a point, and there is no binomial condition on $P_j$.
Hence, $|Y_{c,j}|=1$. 

Case $d_j = \dim P_j = 1$: 
By the second item in (iii),
$\sum_{i'=1}^{i} \bar b^j_{i'} \ge 0$ for all $i=1,\ldots,\omega_j-1$
(or ``$\le0$'' for all $i$) and $\bar b^j_1 \cdot \bar b^j_{\omega_j} < 0$.
By Theorem~\ref{thm:one_class} for the subsystem $A_j \, (c^j \circ x^{B_j}) = 0$,
$|Y_{c,j}|=1$. 

Altogether, \blue{$|Y_c| = |Y_{c,1}| \cdot \, \cdots \, \cdot |Y_{c,\ell}| = 1$},
and all implications hold for all~$c$.
\end{proof}


\section{Reaction networks} \label{sec:reaction_networks}

We recall basic notions for reaction networks with mass-action kinetics~\cite{adleman2014mathematics,voit2015150,yu2018mathematical,gunawardena2003chemical} \blue{following~\cite{MuellerRegensburger2014}.
Note that we use index notation (rather than pure cardinality notation) in the definition of matrices,
thereby avoiding arbitrary orderings of the underlying sets.}

A reaction network $(G,y)$ 
is given by a simple directed graph $G=(V,E)$
\blue{with a finite set of vertices $V$ and edge set $E \subseteq V \times V$
together with an injective} map ${y \colon V \to \R^n}$
\blue{(a matrix $Y \in \R^{n \times V}$).}
Every vertex $i \in V$
is labeled with a {\em (stoichiometric) complex} $y(i) \in \R^n$,
and every edge $(i \to i') \in E$ represents a {\em reaction} ${y(i) \to y(i')}$.
\blue{The restriction of the map $y$ (the matrix $Y$) to the set of source vertices $V_s \subseteq V$ is denoted by $y_s \colon V_s \to \R^n$ (respectively, $Y_s \in \R^{n \times V_s}$).}
If all components of the graph are strongly connected,
the network is called {\em weakly reversible}.

\begin{remark}
In the classical definition of a reaction network,
complexes (like reactions) are primary objects and correspond to the vertices (and edges) of the induced complex-reaction graph.
Such reaction networks can also be represented as Euclidean-embedded graphs~\cite{craciun2015toric}.
See also \cite{craciun2019polynomial,craciun2020endotactic}.
\end{remark} 

A {\em mass-action system} $(G_k,y)$ is given by a reaction network $(G,y)$
and positive edge labels $k \in \R^E_>$.
Every edge/reaction $(i \to i') \in E$ is labeled with a {\em rate constant} $k_{i \to i'} > 0$.

The associated ODE system for the \blue{non-negative} {\em concentrations} $x \in \R^n_\ge$ (of $n$ chemical species) is given by
\begin{equation} \label{eq:ode}
	\dd{x}{t} 
	= \sum_{(i \to i') \in E} k_{i \to i'} \, x^{y(i)} \big( y(i')-y(i) \big) .
\end{equation}
The sum ranges over all reactions, 
and every summand is a product of the {\em reaction rate} $k_{i \to i'} \, x^{y(i)}$, involving a monomial $x^{y} = \prod_{j=1}^n (x_j)^{y_j}$ determined by the stoichiometric complex of the reactant,
and the {\em reaction vector} $y(i')-y(i)$ given by the stoichiometric complexes of product and reactant.

Let $I_E \in \{-1,0,1\}^{V \times E}$ and $I_{E,s} \in \{0,1\}^{V_s \times E}$ be the incidence and source matrices of the digraph~$G$, respectively,
and $R_k = I_E \diag(k) (I_{E,s})^\trans \in \R^{V \times V_s}$
be the rectangular ``Laplacian matrix''.
(For details on the index notation, see Appendix~\ref{app:index}.)
Further, 
\blue{recall the (source) complex matrices,} $Y \in \R^{n \times V}$ and $Y_s \in \R^{n \times V_s}$,
and let $N = Y I_E \in \R^{n \times E}$ be the stoichiometric matrix.
Then, 
the right-hand-side of \eqref{eq:ode} can be written in matrix form and decomposed into stoichiometric and graphical contributions,
\begin{equation} \label{eq:ode:matrices}
\begin{aligned}
	\dd{x}{t}
    &= \underbrace{Y I_E}_N \left( k \circ x^{Y_s I_{E,s}} \right)  
	= Y \underbrace{I_E \diag(k) (I_{E,s})^\trans}_{R_k} x^{Y_s} \\
    &= \Gamma_k \, x^{Y_s} .
    \end{aligned}
\end{equation}
\blue{In the last step,
we have introduced the {\em kinetic matrix} $\Gamma_k = Y R_k \in \R^{n \times V_s}$,
in analogy to the stoichiometric matrix $N = Y I_E$.}

\begin{remark}
Traditionally,
one uses the source matrix $I'_{E,s} \in \{0,1\}^{V \times E}$ which involves all vertices (not just the source vertices),
and one obtains
\[
\dd{x}{t} = Y I_E \left( k \circ x^{Y I'_{E,s}} \right) 
= Y \underbrace{I_E \diag(k) (I'_{E,s})^\trans}_{\mathcal{L}_k} x^{Y}
\]
with the (square) Laplacian matrix $\mathcal{L}_k \in \R^{V \times V}$.
This formulation can be misleading
since columns of $\mathcal{L}_k$ corresponding to non-source vertices are zero,
and, after multiplication, non-source monomials do not appear in $\dd{x}{t} = Y \blue{\mathcal{L}_k} \, x^{Y}$.
Indeed,
$R_k$ arises from $\mathcal{L}_k$ by deleting zero columns,
and $\im R_k = \im \mathcal{L}_k$. 
\end{remark}

The change over time lies in the {\em kinetic subspace} $K = \im (Y R_k)$
and further in the stoichiometric subspace {\em stoichiometric subspace} $S = \im (Y I_E)$, 
\[
\dd{x}{t} \in K \subseteq S .
\]
Hence, trajectories are confined to cosets of $K$ and $S$, respectively,
that is, $x(t) \in x(0)+K \subseteq x(0) + S$.
For positive $x' \in \R^n_>$, the sets $(x'+K) \subseteq (x'+S) \cap \R^n_>$ are called {\em kinetic} and {\em stoichiometric compatibility classes}, respectively.

Finally, we introduce non-negative integer characteristics of a graph or a reaction network.
In particular, let $l$ be the number of {\em components} of $G$ (not to be confused with $\ell$ defined below),
let $t$ be the number of {\em \blue{absorbing} strong components},
and let $t'$ be the number of \blue{non-singleton absorbing} strong components.
It is well known that 
\[
\dim (\ker I_E^\trans) = l
\quad \text{and} \quad
\dim (\ker \mathcal{L}_k) = t ,
\]
which further implies
\[
\dim(\im I_E) = |V| - l
\quad \text{and} \quad
\dim(\im R_k) = \dim(\im \mathcal{L}_k) = |V| - t .
\]
Analogously, 
\[
\dim (\ker R_k) = t' .
\]

Most importantly, the (stoichiometric) {\em deficiency} is given by
\[
\delta 
= \dim(\ker Y \cap \im I_E) 
= |V| - l - \dim(S) .
\]

From the facts above,
it can be easily shown that
\[
t=l \quad\implies\quad K=S \quad\implies\quad \delta \ge t-l .
\]
Mass-action systems with $K \neq S$ can be ``pathological''.
The assumption $t=l$ in the classical deficiency one theorem~\cite{Feinberg1987} rules out such systems.
In the extended deficiency one theorem~\cite{Feinberg1995a} (for independent subnetworks),
this assumption is missing.

\blue{For a summary of the notation introduced in this section, see Table~\ref{tab}(a).
For an illustration, we continue the example from Section~\ref{sec:previous}.}

\blue{
\begin{running}[continued]
We return to the reaction network (with rate constants)
\begin{gather*}
    0 \xleftrightarrows{k_{12}}{k_{21}} X_1 , \quad
    2X_1 \xleftarrow[k_{43}]{} 3 X_1 \xrightarrow{k_{45}} 2X_1 + X_2 \xleftrightarrows{k_{56}}{k_{65}} X_1 + 2X_2 
\end{gather*}
or, equivalently, the ``embedded'' graph
\begin{center}
\begin{tikzpicture}[scale=2]
    \draw [step=1, gray, very thin] (0,0) grid (3,2);
\draw [ ->, black!70!white] (0,0)--(3.25,0);
\draw [ ->, black!70!white] (0,0)--(0,2.25);

\begin{scope}[shift={(0,0)}]

\node[circle, fill=blue, inner sep=1pt, outer sep=5pt] (A) at (0,0) {};
\node[circle, fill=blue, inner sep=1pt, outer sep=5pt] (B) at (1,0) {};
\node[circle, fill=blue, inner sep=1pt, outer sep=5pt] (C) at (2,0) {};
\node[circle, fill=blue, inner sep=1pt, outer sep=5pt] (D) at (3,0) {};
\node[circle, fill=blue, inner sep=1pt, outer sep=5pt] (E) at (2,1) {};
\node[circle, fill=blue, inner sep=1pt, outer sep=5pt] (F) at (1,2) {};

\node[inner sep=0,outer sep=1] (0) at (0,0) {};

\draw[arrows={-stealth},very thick,blue,transform canvas={yshift=2pt}] (A) to (B);
\draw[arrows={-stealth},very thick,blue,transform canvas={yshift=-2pt}] (B) to (A);
\draw[arrows={-stealth},very thick,blue,transform canvas={yshift=0pt}] (D) to (C);
\draw[arrows={-stealth},very thick,blue,transform canvas={xshift=0pt}] (D) to (E);
\draw[arrows={-stealth},very thick,blue,transform canvas={xshift=1.4pt,yshift=1.4pt}] (E) to (F);
\draw[arrows={-stealth},very thick,blue,transform canvas={xshift=-1.4pt,yshift=-1.4pt}] (F) to (E);

\node [below left] (A) at (A) {$1$};
\node [below] (B) at (B) {$2$};
\node [below] (C) at (C) {$3$};
\node [below right] (D) at (D) {$4$};
\node [above right] (E) at (E) {$5$};
\node [above left] (F) at (F) {$6$};


\end{scope}



\end{tikzpicture}
\end{center}
In our definition, the reaction network $(G,y)$ is given by the graph $G=(V,E)$ with vertex set $V=\{1,2,3,4,5,6\}$ and edge set $E=\{12,21,43,45,56,65\}$
(where we write $ij$ short for $i\to j$)
and the map $y \colon V \to \R^n$ with $n=2$ and $y(1)=\binom{0}{0}, y(2) = \binom{1}{0}$, $y(3)=\binom{2}{0}, y(4) = \binom{3}{0}$, $y(5)=\binom{2}{1}, y(6) = \binom{1}{2}$ (which labels vertices with complexes).
The mass-action system $(G_k,y)$ further labels edges with positive rate constants $k = (k_{12}, k_{21}, k_{43}, k_{45}, k_{56}, k_{65})^\trans \in \R^E_>$.
Note that the set of source vertices is $V_s=\{1,2,4,5,6\}$.

First, the $V \times E$ incidence and $V_s \times E$ source matrices (of the graph) are given by
\small
\[
I_E = 
\bordermatrix{& \gray{12} & \gray{21} & \gray{43} & \gray{45} & \gray{56} & \gray{65} \cr
\gray{1} &-1 & 1 & 0 & 0 & 0 & 0 \cr
\gray{2} & 1 &-1 & 0 & 0 & 0 & 0 \cr
\gray{3} & 0 & 0 & 1 & 0 & 0 & 0 \cr
\gray{4} & 0 & 0 &-1 &-1 & 0 & 0 \cr
\gray{5} & 0 & 0 & 0 & 1 &-1 & 1 \cr
\gray{6} & 0 & 0 & 0 & 0 & 1 &-1
}
\quad \text{and} \quad
I_{E,s} = 
\bordermatrix{& \gray{12} & \gray{21} & \gray{43} & \gray{45} & \gray{56} & \gray{65} \cr
\gray{1} & 1 & 0 & 0 & 0 & 0 & 0 \cr
\gray{2} & 0 & 1 & 0 & 0 & 0 & 0 \cr
\gray{4} & 0 & 0 & 1 & 1 & 0 & 0 \cr
\gray{5} & 0 & 0 & 0 & 0 & 1 & 0 \cr
\gray{6} & 0 & 0 & 0 & 0 & 0 & 1
} 
\]
\normalsize
with resulting $V \times V_s$ (rectangular) Laplacian matrix
\small
\[
R_k = I_E \diag(k) (I_{E,s})^\trans = 
\bordermatrix{& \gray{1} & \gray{2} & \gray{4} & \gray{5} & \gray{6} \cr
\gray{1} &-k_{12} & k_{21} &             0 &      0 &      0 \cr
\gray{2} & k_{12} &-k_{21} &             0 &      0 &      0 \cr
\gray{3} &      0 &      0 & k_{43}        &      0 &      0 \cr
\gray{4} &      0 &      0 &-k_{43}-k_{45} &      0 &      0 \cr
\gray{5} &      0 &      0 &        k_{45} &-k_{56} & k_{65} \cr
\gray{6} &      0 &      0 &             0 & k_{56} &-k_{65}
} .
\]
\normalsize

Second, 
the $n \times V$ complex and $n \times V_s$ source complex matrices (arising from the underlying map) are given by
\small
\[
Y = 
\bordermatrix{& \gray{1} & \gray{2} & \gray{3} & \gray{4} & \gray{5} & \gray{6} \cr
& 0 & 1 & 2 & 3 & 2 & 1 \cr
& 0 & 0 & 0 & 0 & 1 & 2
}
\quad \text{and} \quad
Y_s = 
\bordermatrix{& \gray{1} & \gray{2} & \gray{4} & \gray{5} & \gray{6} \cr
& 0 & 1 & 3 & 2 & 1 \cr
& 0 & 0 & 0 & 1 & 2
} .
\]
\normalsize

Third, the $n \times E$ stoichiometric and $n \times V_s$ kinetic matrices are given by
\small
\[
N = Y I_E =
\bordermatrix{& \gray{12} & \gray{21} & \gray{43} & \gray{45} & \gray{56} & \gray{65} \cr
& 1 &-1 &-1 &-1 &-1 & 1 \cr
& 0 & 0 & 0 & 1 & 1 &-1
}
\]
\text{and}
\[
\Gamma_k = Y R_k = 
\bordermatrix{& \gray{1} & \gray{2} & \gray{4} & \gray{5} & \gray{6} \cr
& k_{12} &-k_{21} &-k_{43}-k_{45} &-k_{56} & k_{65} \cr
& 0      & 0      & k_{45}        & k_{56} &-k_{65}
} .
\]
\normalsize

They define the stoichiometric and kinetic subspaces, $S = \im N = \R^2$ and $K= \im \Gamma_k = \R^2$ (for all $k$).

The network has $l=2$ components (with vertex sets $\{1,2\}$ and $\{3,4,5,6\}$) and $t=3$ absorbing strong components (with vertex sets $\{1,2\}$, $\{3\}$, and $\{5,6\}$), in particular, $t\neq l$.
The resulting deficiency is $\delta = |V|-l-\dim S = 6 - 2 - 2 = 2$.
Further,
we note that the deficiencies of the two components are $\delta_1 = 2 - 1 - 1 = 0$ and $\delta_2 = 4 - 1 - 2 = 1$, and hence $\delta \neq \delta_1+\delta_2$.

\end{running}
}


\section{Network decomposition} \label{sec:network_decomposition}

First, we decompose a reaction network/mass-action system and the associated ODE.
Then, in Subsection~\ref{sec:equil},
we formulate the resulting polynomial equations for positive equilibria. Only in Subsection~\ref{sec:indep},
we assume independent subnetworks (in the sense of~\cite{Feinberg1987}) and \blue{address} the decomposability of the polynomial equations (in the sense of~\cite{MuellerRegensburger2023a}).

In order to decompose the right-hand side of the ODE of a mass-action system,
\begin{equation} \label{eq:ODE:start}
\dd{x}{t} 
= Y I_E \left( k \circ x^{Y_s I_{E,s}} \right) ,
\end{equation}
we proceed in three steps.

(i) We assume that the edge set is partitioned into $\ell$ disjoint subsets,
$E = E^1 \dot\cup \cdots \dot\cup E^\ell$.
(For the moment, the partition is arbitrary.
In Subsection~\ref{sec:indep}, 
it will arise from writing the kernel of $N = Y I_E$
as a direct product.)
The partition induces the subgraphs $G^j = (V^j,E^j)$, $j = 1, \ldots, \ell$,
with vertex sets $V^j$, source vertex sets $V_s^j$, and non-source vertex sets $V_\textit{ns}^j$.
Note that the vertex sets need not be disjoint.
Further note that
\blue{absorbing strong components are either non-source vertices or non-singleton strong components.
Let} $t_j$ denote the number of \blue{absorbing} strong components (of the subgraph $G^j$)
and $t'_j$ denote the number of \blue{non-singleton absorbing} strong components. Clearly,
$t_j = t'_j + |V^j_\textit{ns}|$.
The subgraph $G^j$ has
incidence and source matrices
\[
I_E^j \in \{-1,0,1\}^{V^j \times E^j} , \quad
I_{E,s}^j \in \{0,1\}^{V_s^j \times E^j} .
\]

The corresponding subnetwork $(G^j,y^j)$ has
complex matrix $Y^j \in \R^{n \times V^j}$
and source complex matrix $Y_s^j \in \R^{n \times V_s^j}$.
(All matrices for a subgraph are submatrices of the corresponding matrices for the full graph.)
Clearly,
\[
Y I_E 
= 
\begin{pmatrix}
Y^1 I_E^1 & \ldots & Y^\ell I_E^\ell
\end{pmatrix} 
\quad \text{and} \quad
Y_s \, I_{E,s} 
= 
\begin{pmatrix}
Y_s^1 I_{E,s}^1 & \ldots & Y_s^\ell I_{E,s}^\ell
\end{pmatrix} 
.
\]
The vector of parameters $k \in \R^E_>$ is partitioned accordingly,
\[
k = 
\begin{pmatrix}
k^1 \\ \vdots \\ k^\ell
\end{pmatrix}
\]
with $k^j \in \R_>^{E^j}$,
and the rectangular Laplacian matrix
of the mass-action system $(G_k^j,y^j)$
is given by
\[
R_k^j = I_E^j \diag(k^j) (I_{E,s}^j)^\trans \in \R^{V^j \times V_s^j} .
\]

(ii) In turn,
we define ``combined'' block(-diagonal) matrices from the matrices of the subgraphs.
Since the vertex sets need not be disjoint, 
we form the {\em disjoint unions} of (source) vertex sets, $V^\sqcup = V^1 \sqcup \cdots \sqcup V^\ell$ and $V_s^\sqcup = V_s^1 \sqcup \cdots \sqcup V_s^\ell$.
We introduce the combined incidence and source matrices
\[
I^*_E
=
\begin{pmatrix}
I_E^1 & & 0 \\
& \ddots & \\
0 & & I_E^\ell 
\end{pmatrix} 
\in \{-1,0,1\}^{V^\sqcup \times E}
, \quad
I^*_{E,s}
=
\begin{pmatrix}
I_{E,s}^1 & & 0 \\
& \ddots & \\
0 & & I_{E,s}^\ell 
\end{pmatrix} 
\in \{0,1\}^{V_s^\sqcup \times E} 
\]
and the combined rectangular Laplacian matrix
\[
R^*_k 
=
I^*_E \diag(k) (I^*_{E,s})^\trans
=
\begin{pmatrix}
R_k^1 & & 0 \\
& \ddots & \\
0 & & R_k^\ell 
\end{pmatrix} 
\in \R^{V^\sqcup \times V_s^\sqcup} .
\]
The (source) complex matrices can be combined accordingly,
\[
Y^* 
=
\begin{pmatrix}
Y^1 & \ldots & Y^\ell
\end{pmatrix}
\in \R^{n \times V^\sqcup}
, \quad
Y_s^* 
=
\begin{pmatrix}
Y_s^1 & \ldots & Y_s^\ell
\end{pmatrix}
\in \R^{n \times V_s^\sqcup} .
\]
On the one hand,
$
Y^* I^*_E = Y I_E .
$
On the other hand,
$
Y_s^* = Y_s \, I^*_{V,s}
$
with a matrix $I^*_{V,s} \in \{0,1\}^{V_s \times V_s^\sqcup}$
that assigns to every source vertex of a subgraph the corresponding source vertex of the full graph.

(iii)
Now, we return to the ODE~\eqref{eq:ODE:start}.
Like the matrix $Y I_E$,
the vector of monomials ${x^{Y_s I_{E,s}} \in \R^E_\ge}$ has $\ell$ blocks and can be decomposed as
\begin{align*}
x^{Y_s I_{E,s}} 
&= 
\begin{pmatrix}
x^{Y_s^1 I_{E,s}^1} \\
\vdots \\
x^{Y_s^\ell I_{E,s}^\ell} 
\end{pmatrix} 
=
\begin{pmatrix}
(I_{E,s}^1)^\trans x^{Y_s^1} \\
\vdots \\
(I_{E,s}^\ell)^\trans x^{Y_s^\ell}
\end{pmatrix}
\\
&= 
\begin{pmatrix}
(I_{E,s}^1)^\trans & & 0 \\
& \ddots & \\
0 & & (I_{E,s}^\ell)^\trans 
\end{pmatrix} 
\begin{pmatrix}
x^{Y_s^1} \\
\vdots \\
x^{Y_s^\ell} 
\end{pmatrix} 
=
(I^*_{E,s})^\trans x^{Y^*_s} .
\end{align*}

Using the definitions above,
we decompose the right-hand side of the ODE as
\begin{align*}
\dd{x}{t} 
&= Y I_E \left( k \circ x^{Y_s I_{E,s}} \right) 
= Y I_E \diag(k) \, x^{Y_s I_{E,s}} \\
&= Y^* I^*_E \diag(k) (I^*_{E,s})^\trans x^{Y^*_s} 
= Y^* R^*_k \, x^{Y^*_s} .
\end{align*}

Finally, we introduce \blue{the kinetic matrices}
\[
\Gamma_k^j = Y^j R_k^j \in \R^{n \times V_s^j} , \quad j=1,\ldots,\ell,
\]
and \blue{the combined kinetic matrix}
\[ 
\Gamma_k^*
= Y^* R_k^*
= \begin{pmatrix} \Gamma_k^1 & \ldots & \Gamma_k^\ell \end{pmatrix}
= \begin{pmatrix} Y^1 R_k^1 & \ldots & Y^\ell R_k^\ell \end{pmatrix} \in \R^{n \times V_s^\sqcup} 
\]
and summarize the decomposition as
\begin{equation} \label{eq:ODE:end}
\dd{x}{t} 
= 
\Gamma_k^* \, x^{Y^*_s} \\
= \sum_{j =1}^\ell \Gamma_k^j \, x^{Y_s^j}.
\end{equation}

Before we proceed with the treatment of the resulting polynomial equations,
we introduce linear subspaces associated with a subnetwork $(G^j,y^j)$ or a mass-action system $(G^j_k,y^j)$, $j=1,\ldots,\ell$.
As above,
we define the stoichiometric and kinetic subspaces
$S_j = \im (Y^j I_E^j)$ and $K_j = \im(Y^j R^j_k)$,
where $K_j \subseteq S_j$ since $\im R^j_k \subseteq \im I_E^j$, 
\blue{and further the combined kinetic subspace $K^* = \im (Y^* R_k^*) = \sum_j K_j$.
%
We record the following fact.
}

\begin{fact} \label{fac:K_sum_K} \marginnote{not used}
\blue{Let $(G,y)$ be a reaction network decomposed into $\ell$ subnetworks.
Then, $S = \sum_{j=1}^\ell S_j$
and}
$K \subseteq K^* = \sum_{j=1}^\ell K_j$.
\end{fact}
\begin{proof}
\blue{
Recall $Y I_E = Y^* I^*_E 
= \begin{pmatrix}
Y^1 I_E^1 & \ldots & Y^\ell I_E^\ell
\end{pmatrix} $
and $Y^* R_k^* = \begin{pmatrix} Y^1 R_k^1 & \ldots & Y^\ell R_k^\ell \end{pmatrix}$.
First,
\[
S = \im (Y I_E) = \im (Y^1 I_E^1) + \cdots + \im (Y^\ell I_E^\ell)
= S^1 + \cdots + S^\ell .
\]
Second,}
$K = \im (Y R_k)$ and \blue{$K^* = \im (Y^* R_k^*)$,}
where
\[
Y R_k 
= Y I_E \diag(k) (I_{E,s})^\trans 
\]
and 
\[
Y^* R_k^*
= Y^* I^*_E \diag(k) (I^*_{E,s})^\trans .
\]
Since $Y I_E = Y^* I^*_E$ and $\im (I_{E,s})^\trans \subseteq \im (I^*_{E,s})^\trans$,
we have $\im (Y R_k) \subseteq \im (Y^* R_k^*)$.
\end{proof}


\subsection{Positive equilibria} \label{sec:equil}

Finally, we consider positive equilibria of the ODE~\eqref{eq:ODE:start}, 
that is,
$x \in \R_>^n$ such that $\dd{x}{t} = 0$.
Clearly,
we can specify them in the form $A \, (c \circ x^B) = 0$ as
\begin{subequations}
\begin{equation} \label{AcBa}
    N \, (k \circ x^{Y_s I_{E,s}}) = 0 ,
\end{equation}
that is, with $A = N \in \R^{n \times E}$, $c = k \in \R_>^E$, $B = Y_s \, I_{E,s} \in \R^{n \times E}$;
equivalently, using the decomposition~\eqref{eq:ODE:end}, 
we can specify them as
\begin{equation} \label{AcBb}
    \Gamma_k^* \, x^{Y^*_s} = 0 , 
\end{equation}
that is, with $A = \Gamma_k^* \in \R^{n \times V_s^\sqcup}$, $c = 1$, and $B = Y^*_s 
\in \R^{n \times V_s^\sqcup}$.
\end{subequations}

In Equation~\eqref{AcBa},
the coefficient matrix $A=N$ (and hence the classes determined by its kernel) do not depend on the parameters $k$.
However, there may be repeated monomials (within classes) giving rise to trivial dependencies.
In Equation~\eqref{AcBb},
the coefficient matrix $A=\Gamma_k^*$ does depend on $k$,
but repeated monomials are handled via the (rectangular) Laplacian matrix (which also eliminates non-source monomials).
Hence, we will consider Equation~\eqref{AcBb},
but use classes arising from Equation~\eqref{AcBa}.


\blue{
\subsection{Independent subnetworks and decomposability} \label{sec:indep}
}

Let $(G,y)$ \blue{with $G=(V,E)$} be a reaction network.
In the following, we assume that the partition of the edge set, \blue{$E = E^1 \dot\cup \cdots \dot\cup E^\ell$,} arises
from \blue{a decomposition of $\ker N$ (as a {\em direct product})}
or, equivalently, from \blue{a decomposition of $S=\im N$ (as a {\em direct sum}).}
That is, 
\begin{subequations}
\[
N = Y I_E 
= \begin{pmatrix} N^1 & \ldots & N^\ell \end{pmatrix} 
= \begin{pmatrix} Y^1 I_E^1 & \ldots & Y^\ell I_E^\ell \end{pmatrix}
\]
such that
\begin{equation} \label{eq:prod:N}
\ker N = \ker N^1 \times \cdots \times \ker N^\ell .
\end{equation}
Equivalently, 
\[
S = S_1 \oplus \cdots \oplus S_\ell .
\]
In the terminology of \cite{Feinberg1987},
the \blue{resulting} subnetworks $(G^j,y^j)$ are {\em independent}.
\blue{In the setting of the deficiency one theorem,
we will assume that the subgraphs $G^j$ are connected
(and say that the subnetworks are connected),
see Section~\ref{sec:deficiency_one}.}

\blue{
Now, let $(G_k,y)$ be the corresponding mass-action system
with combined kinetic matrix}
\[
\Gamma_k^*
= Y^* R_k^*
= \begin{pmatrix} \Gamma_k^1 & \ldots & \Gamma_k^\ell \end{pmatrix}
= \begin{pmatrix} Y^1 R_k^1 & \ldots & Y^\ell R_k^\ell \end{pmatrix} .
\]
\blue{By Proposition~\ref{pro:decomp} below,}
\begin{equation} \label{eq:prod:Gamma}
\ker \Gamma_k^* = \ker \Gamma_k^1 \times \cdots \times \ker \Gamma_k^\ell .
\end{equation}
\end{subequations}
\blue{
Equivalently, 
\[
K^* = K_1 \oplus \cdots \oplus K_\ell .
\]
}%
In the terminology of \cite{MuellerRegensburger2023a}
(for the polynomial equations $\Gamma_k^* \, x^{Y_s^*} = 0$), 
the decomposition of $\ker \Gamma_k^* \subseteq \R^{ V_s^\sqcup}$
induces a partition of the source vertex set $V_s^\sqcup = V_s^1 \sqcup \cdots \sqcup V_s^\ell$ into the $\ell$ {\em classes} $V_s^j$.
\blue{It remains to provide a formal argument.}

\blue{
\begin{proposition} \label{pro:decomp}
Let $(G,y)$ be a reaction network decomposed into $\ell$ independent subnetworks and $(G_k,y)$ be the corresponding mass-action system.
Then, \eqref{eq:prod:N} implies \eqref{eq:prod:Gamma}.
\end{proposition}
}
\begin{proof}
Consider $\xi \in \R^{V_s^\sqcup}$ with blocks $\xi^j \in \R^{V_s^j}$, $j=1,\ldots,\ell$, 
and assume $\Gamma_k^* \, \xi = \sum_{j=1}^\ell \Gamma_k^j \, \xi^j = 0$.
\blue{Further,} introduce $\alpha^j = \diag(k^j) (I_{E,s}^j)^\trans \xi^j \in \R^{E^j}$. \blue{Then,}
\[
N^j \alpha^j = Y^j I_E^j \diag(k^j) (I_{E,s}^j)^\trans \xi^j = Y^j R_k^j \, \xi^j = \Gamma_k^j \, \xi^j 
\]
\blue{and hence} $\sum_{j=1}^\ell N^j \alpha^j = 0$. By~\eqref{eq:prod:N}, $N^j \alpha^j = \Gamma_k^j \, \xi^j = 0$. That is, \eqref{eq:prod:Gamma}.
\end{proof}

\blue{
To summarize, 
the decomposition of the kernel of the stoichiometric matrix implies a partition of the edge set (and a corresponding decomposition of the network into {\em independent subnetworks})
and further a decomposition of the kernel of the combined kinetic matrix (and a corresponding partition of the source vertex set into {\em classes}).
An algorithm for network decomposition has been developed in~\cite{Hernandez2021} and applied to derive steady states analytically in~\cite{Hernandez2023}.
}

For the polynomial equations $\Gamma_k^* \, x^{Y_s^*} = 0$,
the monomial dependency and difference subspaces, $D$ and $L$, are given in 
\blue{Definition~\ref{definition} (with $B=Y_s^*$).
Assuming $\ell$ classes,}
we introduce corresponding subspaces $D_j$ and $L_j$,
\blue{$j = 1,\ldots,\ell$.}
In particular,
$L_j = \im (Y_s^j I_{\mathcal{E}_s}^j)$,
where $I_{\mathcal{E}_s}^j \in \{-1,0,1\}^{V_s^j \times \mathcal{E}_s}$ is the incidence matrix of \blue{a star-shaped graph $(V_s^j,\mathcal{E}_s)$ (on the source vertices)} with $|\mathcal{E}_s| = |V_s^j| - 1$.

As defined in \blue{\cite[Section~3.2]{MuellerRegensburger2023a},
the} polynomial equations \blue{$\Gamma_k^* \, x^{Y_s^*} = 0$} are {\em decomposable} if 
$D$ \blue{has the decomposition} $D=D_1 \times \cdots \times D_\ell$,
\blue{corresponding to the decomposition of $\ker \Gamma_k^*$;}
equivalently, $d = d_1 + \cdots + d_{\ell}$,
where $d_j = \dim D_j$.
\blue{We provide another characterization.}


\begin{proposition} \label{pro:decomposable} \marginnote{used in Corollary~\ref{cor:main:mass-action}}
\blue{Let $(G,y)$ be a reaction network decomposed into $\ell$ (independent) 
subnetworks.}
Then $d = d_1 + \cdots + d_{\ell}$ if and only if  $L=L_1 \oplus \cdots \oplus L_{\ell}$.
\end{proposition}
\begin{proof}
Recall $d = |V^\sqcup_s| - \ell - \dim(L)$ and  $d_j = |V^j_s| - 1 - \dim(L_j)$.
Clearly, $V_s^\sqcup = V_s^1 \sqcup \cdots \sqcup V_s^\ell$ implies $|V_s^\sqcup| = |V_s^1| + \cdots + |V_s^\ell|$.
Hence, 
\[
d = |V^\sqcup_s| - \ell - \dim(L) = \sum_{j=1}^\ell d_j = \sum_{j=1}^\ell \left( |V^j_s| - 1 - \dim(L_j) \right)
\]
if and only if $\dim(L) =\sum_{j=1}^\ell \dim(L_j)$, that is, $L = L_1 \oplus \cdots \oplus L_\ell$. 
\end{proof}

\blue{For a summary of the notation introduced in this section, see Table~\ref{tab}(bc). For an
illustration, we continue the example from Sections~\ref{sec:previous} and~\ref{sec:reaction_networks}.}

\blue{
\begin{running}[continued]
For the last time, we return to the reaction network (with rate constants)
\begin{gather*}
    0 \xleftrightarrows{k_{12}}{k_{21}} X_1 , \quad
    2X_1 \xleftarrow[k_{43}]{} 3 X_1 \xrightarrow{k_{45}} 2X_1 + X_2 \xleftrightarrows{k_{56}}{k_{65}} X_1 + 2X_2 .
\end{gather*}
The decomposition of the kernel of the stoichiometric matrix $N = \begin{pmatrix} N^1 & N^2 \end{pmatrix}$ with 
\small
\[
N^1 =
\bordermatrix{& \gray{12} & \gray{21} & \gray{43} \cr
& 1 &-1 &-1 \cr
& 0 & 0 & 0 
}
\quad \text{and} \quad
N^2 =
\bordermatrix{& \gray{45} & \gray{56} & \gray{65} \cr
&-1 &-1 & 1 \cr
& 1 & 1 &-1
}
\]
\normalsize
as a direct product, that is, $\ker N = \ker N^1 \times \ker N^2$ implies a partition of the edge set,
$E = E^1 \dot\cup E^2$ with $E^1 = \{12,21,43\}$ and $E^2 = \{45,56,65\}$,
and hence a decomposition of the reaction network $(G,y)$ into $\ell=2$ independent subnetworks.
\begin{center}
\begin{tikzpicture}[scale=2]
    \draw [step=1, gray, very thin] (0,0) grid (3,2);
\draw [ ->, black!70!white] (0,0)--(3.25,0);
\draw [ ->, black!70!white] (0,0)--(0,2.25);

\begin{scope}[shift={(0,0)}]

\node[circle, fill=blue, inner sep=1pt, outer sep=5pt] (A) at (0,0) {};
\node[circle, fill=blue, inner sep=1pt, outer sep=5pt] (B) at (1,0) {};
\node[circle, fill=blue, inner sep=1pt, outer sep=5pt] (C) at (2,0) {};
\node[circle, fill=blue, inner sep=1pt, outer sep=5pt] (D) at (3,0) {};
\node[circle, fill=blue, inner sep=1pt, outer sep=5pt] (E) at (2,1) {};
\node[circle, fill=blue, inner sep=1pt, outer sep=5pt] (F) at (1,2) {};

\node[inner sep=0,outer sep=1] (0) at (0,0) {};

\draw[arrows={-stealth},very thick,red,transform canvas={yshift=2pt}] (A) to (B);
\draw[arrows={-stealth},very thick,red,transform canvas={yshift=-2pt}] (B) to (A);
\draw[arrows={-stealth},very thick,red,transform canvas={yshift=0pt}] (D) to (C);
\draw[arrows={-stealth},very thick,green,transform canvas={xshift=0pt}] (D) to (E);
\draw[arrows={-stealth},very thick,green,transform canvas={xshift=1.4pt,yshift=1.4pt}] (E) to (F);
\draw[arrows={-stealth},very thick,green,transform canvas={xshift=-1.4pt,yshift=-1.4pt}] (F) to (E);

\node [below left] (A) at (A) {$1$};
\node [below] (B) at (B) {$2$};
\node [below] (C) at (C) {$3$};
\node [below right] (D) at (D) {$4$};
\node [above right] (E) at (E) {$5$};
\node [above left] (F) at (F) {$6$};


\end{scope}



\end{tikzpicture}
\end{center}
In the ``embedded'' graph,
the subnetworks $(G^1,y^1)$ and $(G^2,y^2)$ are shown in red and green, respectively.
Note that $G^1=(V^1,E^1)$ has vertex set $V^1 = \{ 1,2,3,4 \}$, whereas $V^2 = \{ 4,5,6 \}$ for $G^2=(V^2,E^2)$,
and hence $V^1 \cap V^2 = \{4\} \neq \emptyset$.
Accordingly, we introduce the disjoint union of vertex sets \[ V^\sqcup = V^1 \sqcup V^2 = \{ 1,2,3,4^1,4^2,5,6 \} , \]
which contains two ``copies'' of the element in the intersection, $4 \in V^1 \cap V^2$,
and the disjoint union of the source vertex sets $V_s^\sqcup = V_s^1 \sqcup V_s^2 = \{ 1,2,4^1,4^2,5,6 \}$.

From the matrices for the subnetworks, we define the corresponding block(-diagonal) combined matrices:
the combined $V^\sqcup \times E$ incidence and $V_s^\sqcup \times E$ source matrices,
\small
\[
I_E^* = 
\bordermatrix{& \gray{12} & \gray{21} & \gray{43} & \gray{45} & \gray{56} & \gray{65} \cr
\gray{1}   &-1 & 1 & 0 & 0 & 0 & 0 \cr
\gray{2}   & 1 &-1 & 0 & 0 & 0 & 0 \cr
\gray{3}   & 0 & 0 & 1 & 0 & 0 & 0 \cr
\gray{4^1} & 0 & 0 &-1 & 0 & 0 & 0 \cr
\gray{4^2} & 0 & 0 & 0 &-1 & 0 & 0 \cr
\gray{5}   & 0 & 0 & 0 & 1 &-1 & 1 \cr
\gray{6}   & 0 & 0 & 0 & 0 & 1 &-1
}
\quad \text{and} \quad
I_{E,s}^* = 
\bordermatrix{& \gray{12} & \gray{21} & \gray{43} & \gray{45} & \gray{56} & \gray{65} \cr
\gray{1}   & 1 & 0 & 0 & 0 & 0 & 0 \cr
\gray{2}   & 0 & 1 & 0 & 0 & 0 & 0 \cr
\gray{4^1} & 0 & 0 & 1 & 0 & 0 & 0 \cr
\gray{4^2} & 0 & 0 & 0 & 1 & 0 & 0 \cr
\gray{5}   & 0 & 0 & 0 & 0 & 1 & 0 \cr
\gray{6}   & 0 & 0 & 0 & 0 & 0 & 1
} ,
\]
\normalsize
the combined $V^\sqcup \times V_s^\sqcup$ (rectangular) Laplacian matrix,
\small
\[
R_k^* = I_E^* \diag(k) (I_{E,s}^*)^\trans = 
\bordermatrix{& \gray{1} & \gray{2} & \gray{4^1} & \gray{4^2} & \gray{5} & \gray{6} \cr
\gray{1}   &-k_{12} & k_{21} &      0 &      0 &      0 &      0 \cr
\gray{2}   & k_{12} &-k_{21} &      0 &      0 &      0 &      0 \cr
\gray{3}   &      0 &      0 & k_{43} &      0 &      0 &      0 \cr
\gray{4^1} &      0 &      0 &-k_{43} &      0 &      0 &      0 \cr
\gray{4^2} &      0 &      0 &      0 &-k_{45} &      0 &      0 \cr
\gray{5}   &      0 &      0 &      0 & k_{45} &-k_{56} & k_{65} \cr
\gray{6}   &      0 &      0 &      0 &      0 & k_{56} &-k_{65}
} ,
\]
\normalsize
the combined $n \times V^\sqcup$ complex and $n \times V_s^\sqcup$ source complex matrices,
\small
\[
Y^* = 
\bordermatrix{& \gray{1} & \gray{2} & \gray{3} & \gray{4^1} & \gray{4^2} & \gray{5} & \gray{6} \cr
& 0 & 1 & 2 & 3 & 3 & 2 & 1 \cr
& 0 & 0 & 0 & 0 & 0 & 1 & 2
}
\quad \text{and} \quad
Y_s^* = 
\bordermatrix{& \gray{1} & \gray{2} & \gray{4^1} & \gray{4^2} & \gray{5} & \gray{6} \cr
& 0 & 1 & 3 & 3 & 2 & 1 \cr
& 0 & 0 & 0 & 0 & 1 & 2
} ,
\]
\normalsize
and the combined $n \times V_s^\sqcup$ kinetic matrix,
\small
\[
\Gamma_k^* = Y^* R_k^* = 
\bordermatrix{& \gray{1} & \gray{2} & \gray{4^1} & \gray{4^2} & \gray{5} & \gray{6} \cr
& k_{12} &-k_{21} &-k_{43} & -k_{45} &-k_{56} & k_{65} \cr
& 0      & 0      & 0      & k_{45}  & k_{56} &-k_{65}
} .
\]
\normalsize
The latter defines the combined kinetic subspace, $K^* = \im \Gamma_k^* = \R^2$ (for all $k$).

\end{running}
}


\begin{table}[htp]
\small
\blue{
\[
\begin{array}{@{}llll@{}}
\toprule
\text{symbol} & \text{dimension} & \text{name} & \text{relation} \\
\midrule  
\multicolumn{4}{c}{\text{\bf (a) reaction network}} \\    
I_E & V \times E & \text{incidence matrix} \\
I_{E,s} & V_s \times E & \text{source matrix} \\
R_k & V \times V_s & \text{(rectangular) Laplacian matrix} & R_k = I_E \diag(k) (I_{E,s})^\trans \\
Y & n \times V & \text{complex matrix} \\
Y_s & n \times V_s & \text{source complex matrix} \\
N & n \times E & \text{stoichiometric matrix} & N = Y I_E \\
\Gamma_k & n \times V_s & \text{kinetic matrix} & \Gamma_k = Y R_k\\
S & & \text{stoichiometric subspace} & S = \im N \\
K & & \text{kinetic subspace} & K = \im \Gamma_k \\
l & & \text{number of connected components} \\
t & & \text{number of absorbing strong comp.} \\
\delta & & \text{deficiency} & \delta = |V| - l - \dim(S) \\
\midrule
\multicolumn{4}{c}{\text{\bf (b) network decomposition}} \\    
\ell & & \text{number of (independent) subnetworks} \\  
\multicolumn{2}{@{}l}{I_E^j, I_{E,s}^j, R_k^j, Y^j, \ldots} & 
\text{objects in (a) for } j = 1, \ldots, \ell \\
V^\sqcup & & \text{disjoint union of vertex sets} & V^\sqcup = V^1 \sqcup \cdots \sqcup V^\ell \\
V_s^\sqcup & & \text{disjoint union of source vertex sets} & V_s^\sqcup = V_s^1 \sqcup \cdots \sqcup V_s^\ell \\
I_E^* & V^\sqcup \times E & \text{combined incidence matrix} \\
I_{E,s}^* & V_s^\sqcup \times E & \text{combined source matrix} \\
R_k^* & V^\sqcup \times V_s^\sqcup & \text{combined (rect.) Laplacian matrix} & R_k^* = I_E^* \diag(k) (I_{E,s}^*)^\trans \\
Y^* & n \times V^\sqcup & \text{combined complex matrix} \\
Y_s^* & n \times V_s^\sqcup & \text{combined source complex matrix} \\
\Gamma_k^* & n \times V_s^\sqcup & \text{combined kinetic matrix} & \Gamma_k^* = Y^* R_k^* \\ 
K^* & & \text{combined kinetic subspace} & K^* = \im \Gamma_k^* \\ 
\midrule
\multicolumn{4}{c}{\text{\bf (c) polynomial equations}} \\ 
P & & \text{coefficient polytope} \\
L & & \text{monomial difference subspace} \\
D & & \text{monomial dependency subspace} \\
d & & \text{monomial dependency} & d = \dim D \\
\multicolumn{2}{@{}l}{P_j, L_j, D_j, d_j} & \text{objects for } j = 1,\ldots,\ell \\
\bottomrule
\end{array}
\]
\captionsetup{font=small}
\caption{
\blue{Notation introduced in Sections~\ref{sec:reaction_networks} and \ref{sec:network_decomposition}.
(a) Matrices and subspaces 
for a reaction network $(G,y)$ / a mass-action system $(G_k,y)$
with underlying graph $G=(V,E)$ and edge labels~$k$.
(b) Network decomposition:
objects in (a) for subnetworks $(G^j,y^j)$ with $G^j=(V^j,E^j)$ and combined objects for the full network $(G,y)$.
(c) Polytope and subspaces for the parametrized system of polynomial equations $\Gamma_k^* \, x^{Y^*_s} = 0$,
arising from the decomposed dynamical system~\eqref{eq:ODE:end}.
See Definition~\ref{definition} for $A \left( c\circ x^B \right) = 0$ with $A=\Gamma_k^*$, $B=Y^*_s$, and $c=1$;
analogously for the subsystems $\Gamma_k^j \, x^{Y_s^j} = 0$.}
}
\label{tab}
}
\end{table}


\blue{
\section{Dependency one mass-action systems} \label{sec:main}
}

We present a {\em dependency} one theorem for mass-action systems,
extending the deficiency one theorems by Feinberg~\cite{Feinberg1987,Feinberg1995a},
cf.\ Theorem~\ref{thm:def}.

In Theorem~\ref{thm:main:mass-action} below, 
we neither assume $\delta_i \le 1$, $t_i = 1$, nor $K=S$.
\blue{
Most importantly, we need to guarantee $d = \dim P$ (per class), in order to apply Theorem~\ref{thm:main}.
}

\begin{lemma} \label{lem:d_dimP} 

\blue{
Let $(G,y)$ be a reaction network with one subnetwork (the network itself)
and let $(G_k,y)$ be the corresponding mass-action system.
Consider the equilibrium equation $\Gamma_k \, x^{Y_s} = 0$  (with one class) and assume $\ker \Gamma_k \cap \R^{V_s}_> \neq \emptyset$.}
Then, 
\[
\dim P = d 
\quad \text{if and only if} \quad
\dim K = \dim L . 
\]
\end{lemma}
\begin{proof}
On the one hand, 
$\dim P + 1 = \dim C = \dim (\ker \Gamma_k) = \dim (\ker (Y R_k)) = |V_s| - \dim (\im (Y R_k)) = |V_s| - \dim K$.
On the other hand,
$d + 1 = |V_s| - \dim L$.
Hence, $\dim P = d$ if and only if $\dim K = \dim L$.
\end{proof}

\blue{
As stated above,
we do not explicitly restrict the number of \blue{absorbing} strong components (per class), in particular, if they are singletons.
However, we record that} $\dim P \le 1$ (an at most one-dimensional coefficient polytope) implies \blue{$t' \le 2$ (at most two non-singleton \blue{absorbing} strong components).}

\begin{fact} \label{fac:tprime} \marginnote{not used}
\blue{
Let $(G,y)$ be a reaction network with one subnetwork (the network itself)
and let $(G_k,y)$ be the corresponding mass-action system.
Consider the equilibrium equation $\Gamma_k \, x^{Y_s} = 0$ (with one class) and assume $\ker \Gamma_k \cap \R^{V_s}_> \neq \emptyset$.}
Then, 
\[
\dim P \le 1 \implies \blue{t' \le 2 .}
\]
\blue{In fact, $t'=2$ only if there are two components ($l=2$) which are strongly connected.}
\end{fact}
\begin{proof}
Let $\dim P \le 1$. Then, $t' \le 2$ since
\begin{align*}
\dim P+1 &= \dim C = \dim (\ker \Gamma_k) = \dim (\ker (Y R_k)) \\
&= \dim (\ker Y \cap \im R_k) + \underbrace{\dim (\ker R_k)}_{t'} . 
\end{align*}
\blue{
If $t' = 2$, then $\ker \Gamma_k = \ker R_k$ is generated by two nonnegative vectors with support on the two non-singleton \blue{absorbing} strong components.
Since the union of their supports is the set of all source vertices,
there are two components, and they are strongly connected. 
}
\end{proof}

\begin{theorem}[$d_j\le1$, mass-action systems] \label{thm:main:mass-action}
Let $(G_k,y)$ be a mass-action system with $\ell$ independent subnetworks. 
\blue{Recall the kinetic and stoichiometric subspaces, $K$ and $S$, respectively.
For the equilibrium equation~$\Gamma^*_k \, x^{Y_s^*} = 0$,
recall the monomial dependency~$d$ and the monomial difference subspace~$L$.} 
Let $(G_k,y)$ fulfill the following conditions:
\begin{enumerate}
\item[(I)]
$\ker \Gamma_k^* \cap \R^{V^\sqcup_s}_> \neq \emptyset$.
\item[(II)] $d = d_1 + \cdots + d_{\ell}$.
\item[(III)]
For every (class) $j=1,\ldots,\ell$, 
\begin{itemize}
    \item 
    $d_j \le 1$ and $\dim(K_j)= \dim(L_j)$.
    \item 
    If $d_j=1$, then
    \begin{itemize}
        \item 
        $\displaystyle \sum_{i'=1}^{i} \bar b^j_{i'} \ge 0$ for all $i=1,2,\ldots,\omega_j-1$
        (or ``$\le0$'' for all $i$)
        \item
        $\bar b^j_1 \cdot \bar b^j_{\omega_j} < 0$.
    \end{itemize}
    (The components of the (lumped) dependency vector $\bar b^j \in \R^{\omega_j}$ are ordered with respect to the vector $\bar q^j \in \R^{\omega_j}$ of the polytope~$P_j$, \blue{see~Definition~\ref{definition_decomposable systems}.}) 
\end{itemize}
\item[(IVa)]
\blue{$L=K$} or (IVb) $L=S$.
\end{enumerate}
Then,
there exists a unique positive equilibrium within every (IVa) \underline{kinetic} or (IVb) \underline{stoichiometric} compatibility class.
\end{theorem}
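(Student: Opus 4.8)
The plan is to assemble Theorem~\ref{thm:main:mass-action} from the pieces already developed, using the two formulations \eqref{AcBa}--\eqref{AcBb} of the equilibrium equations together with Theorem~\ref{thm:main} and Birch's theorem. First I would set up the system $\Gamma_k\,x^{Y^*_s}=0$ as a parametrized polynomial system $A\,(c\circ x^B)=0$ with $A=\Gamma_k$, $c=\ones$, $B=Y^*_s$, and with the $\ell$ classes given by the partition $V_s^\sqcup = V_s^1\sqcup\cdots\sqcup V_s^\ell$ arising from the independent subnetworks (this is exactly the setting of Section~\ref{sec:indep}). Condition (I) gives $\ker A\cap\R^m_>\neq\emptyset$, i.e.\ hypothesis (i) of Theorem~\ref{thm:main}. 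Proposition~\ref{pro:decomposable} gives $d=d_1+\cdots+d_\ell$, i.e.\ hypothesis (ii). For hypothesis (iii): the first bullet of (III) assumes $d_j\le 1$ and $K_j=L_j$; applying Lemma~\ref{lem:d_dimP} to each subsystem $\Gamma^j_k\,x^{Y_s^j}=0$ (which has one class and, by the restriction of (I), a nonempty positive kernel — here one needs that $\ker\Gamma_k$ being positive implies each block $\ker\Gamma^j_k$ is positive, which follows from \eqref{eq:prod:Gamma}) yields $\dim P_j=d_j$, so $d_j=\dim P_j\le 1$. The remaining items of (III) are verbatim the remaining items of (iii). Hence Theorem~\ref{thm:main} applies and $|Y_c|=1$ for all~$c$, i.e.\ for all rate constants $k$.

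Next I would translate $|Y_c|=1$ into the existence/uniqueness of a positive equilibrium in a compatibility class via Theorem~\ref{thm:previous}. That theorem gives $Z_c=\{(y\circ c^{-1})^E\mid y\in Y_c\}\circ\e^{L^\perp}$; with $c=\ones$ and $|Y_c|=1$ this is a single exponential manifold $x^\circ\circ\e^{L^\perp}$ for a distinguished $x^\circ\in\R^n_>$. The claim is that this manifold meets each relevant compatibility class in exactly one point. For case (IIa), $K=L$: I would show $\e^{L^\perp}=\e^{K^\perp}$ and that $x^\circ\circ\e^{K^\perp}$ intersects each kinetic compatibility class $(x'+K)\cap\R^n_>$ in exactly one point; this is precisely Birch's theorem, Theorem~\ref{thm:birch}, applied with the subspace $K$ (writing $x^\circ\circ\e^{K^\perp}=x^\circ\circ (K)^\perp$ in multiplicative coordinates — more carefully, $\{x^\circ\circ\e^v\mid v\in K^\perp\}$ is exactly the set denoted $x^\circ\circ S^\perp$ in the statement of Theorem~\ref{thm:birch} with $S=K$). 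For case (IIb), $L=S$: then $L^\perp=S^\perp$, and the same Birch argument with the subspace $S$ shows that $x^\circ\circ\e^{S^\perp}$ meets each stoichiometric compatibility class $(x'+S)\cap\R^n_>$ in exactly one point. One should also note that $K\subseteq L\subseteq S$ always (from $K_j\subseteq S_j$ via Proposition~\ref{pro:K_sum_K}, or directly from $L=\im M$ and $K=\im(YR_k)$), so the two cases are genuinely the two natural ``$L$ equals a stoichiometry-type subspace'' situations, and in case (IIb) the equilibria actually lie in $x^\circ\circ\e^{S^\perp}$ even if $K\subsetneq S$.

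The main obstacle I anticipate is the careful bookkeeping in the passage between the abstract polynomial-system language of Theorems~\ref{thm:previous}, \ref{thm:birch}, \ref{thm:main} and the reaction-network objects: in particular, (a) verifying that $\ker\Gamma_k\cap\R^{V^\sqcup_s}_>\neq\emptyset$ descends to $\ker\Gamma^j_k\cap\R^{V^j_s}_>\neq\emptyset$ for each~$j$, which is needed to invoke Lemma~\ref{lem:d_dimP} blockwise and uses the direct-product structure \eqref{eq:prod:Gamma}; (b) confirming that the set $\{x^\circ\circ\e^v\mid v\in L^\perp\}$ really is of the form $x^*\circ S'^\perp$ for the relevant subspace $S'\in\{K,S\}$ so that Theorem~\ref{thm:birch} applies directly; and (c) checking that $L\subseteq K$ in case (IIa) — equality $K=L$ is assumed, so this is fine, but one must make sure the distinguished solution manifold lands in the right coset structure. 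Everything else is a direct quotation of earlier results, so once the dictionary between the two languages is pinned down the proof is short.
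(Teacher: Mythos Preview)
Your proposal is correct and follows essentially the same route as the paper's proof: verify hypotheses (i)--(iii) of Theorem~\ref{thm:main} from (I)--(III) (using Proposition~\ref{pro:decomposable} for decomposability and Lemma~\ref{lem:d_dimP} blockwise for $d_j=\dim P_j$), apply Theorem~\ref{thm:main} to get $|Y_c|=1$, then Theorem~\ref{thm:previous} to get $Z_c=x^*\circ\e^{L^\perp}$, and finish with Birch's theorem in the two cases $K=L$ and $L=S$. You are in fact slightly more explicit than the paper in flagging that Lemma~\ref{lem:d_dimP} requires $\ker\Gamma^j_k\cap\R^{V^j_s}_>\neq\emptyset$ for each~$j$, which indeed follows from (I) via the direct-product form~\eqref{eq:prod:Gamma}.
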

\begin{proof}
The mass-action system $(G_k,y)$ gives rise to the polynomial equations $\Gamma_k^* \, x^{Y_s^*} = 0$ with $\ell$ classes.
In order to apply Theorem~\ref{thm:main} \blue{(for $A \left( c \circ x^B \right) = 0$ with $A=\Gamma_k^*$, $B=Y^*_s$, and $c=1$),}
\blue{we show that conditions (i), (ii), (iii) there
follow from conditions (I), (II), (III) here.}

(i) By (I) with $A = \Gamma_k^*$.

(ii) By (II). 

(iii) By (III), $\dim(K_j)= \dim(L_j)$, and by Lemma~\ref{lem:d_dimP}, $d_j = \dim P_j$ (for $j=1,\ldots,\ell$). 
Regarding $d_j=1$, (iii) and (III) are identical. 

By Theorem~\ref{thm:main}, 
the solution set on the coefficient polytope is a singleton, $|Y_c|=1$,
and by Theorem~\ref{thm:previous}, the solution set is an exponential fiber,
$Z_c = x^* \circ \e^{L^\perp}$ with $x^* \in \R^n_>$. 

The set of positive equilibria within the kinetic compatibility class given by $x' \in \R^n_>$ is $x^* \circ \e^{L^\perp} \cap\, (x' + K)$.
By (IVa), $K=L$, and by Theorem~\ref{thm:birch}, the
intersection is a singleton. 

The set of positive equilibria within the stoichiometric compatibility class given by $x' \in \R^n_>$ is $x^* \circ \e^{L^\perp} \cap \,(x' + S)$.
By (IVb), $L=S$, and by Theorem~\ref{thm:birch}, the
intersection is a singleton. 
\end{proof}

\blue{We now provide a simplified version of Theorem~\ref{thm:main:mass-action} that is closer to the (extended) deficiency one theorem
(Theorem~\ref{thm:def} in Section~\ref{subsec:def1thm}).
In particular, we assume that the kinetic, stoichiometric, and monomial difference subspaces of the subnetworks (and hence of the full system) agree.
}

\blue{
\begin{corollary} \label{cor:main:mass-action}
Let $(G_k,y)$ be a mass-action system with $\ell$ independent subnetworks. 
\blue{Recall the kinetic and stoichiometric subspaces, $K$ and $S$, respectively.
For the equilibrium equation~$\Gamma^*_k \, x^{Y_s^*} = 0$,
recall the monomial dependency~$d$ and the monomial difference subspace~$L$.} 
Let $(G_k,y)$ fulfill the following conditions:
\begin{enumerate}
\item [(I')]
$\ker \Gamma_k^* \cap \R^{V^\sqcup_s}_> \neq \emptyset$.
\item [(III')]
For every (independent subnetwork) $j=1,\ldots,\ell$, 
\begin{itemize}
    \item 
    $d_j \le 1$ and $K_j = S_j = L_j$.
    \item 
    If $d_j=1$, then
    \begin{itemize}
        \item 
        $\displaystyle \sum_{i'=1}^{i} \bar b^j_{i'} \ge 0$ for all $i=1,2,\ldots,\omega_j-1$
        (or ``$\le0$'' for all $i$)
        \item
        $\bar b^j_1 \cdot \bar b^j_{\omega_j} < 0$.
    \end{itemize}
    (The components of the (lumped) dependency vector $\bar b^j \in \R^{\omega_j}$ are ordered with respect to the vector $\bar q^j \in \R^{\omega_j}$ of the polytope~$P_j$, \blue{see~Definition~\ref{definition_decomposable systems}.}) 
\end{itemize}
\end{enumerate}
Then,
there exists a unique positive equilibrium within every  \underline{stoichiometric} compatibility class.
\end{corollary}

\begin{proof}
The mass-action system $(G_k,y)$ gives rise to the polynomial equations $\Gamma_k^* \, x^{Y_s^*} = 0$ with $\ell$ classes (determined by the $\ell$ independent subnetworks), cf.~Proposition~\ref{pro:decomp}.
In order to apply Theorem~\ref{thm:main:mass-action},
we show that conditions (I), (II), (III), (IVb) there
follow from conditions (I'), (III') here.

(I) By (I').

(II) By (III'), $L_j=S_j$,
and by the decomposition of $\ker N$, $S=S_1 \oplus \cdots \oplus S_{\ell}$.
Hence, $L=L_1 \oplus \cdots \oplus L_{\ell}$,
and (II) by Proposition~\ref{pro:decomposable}.

(III) By (III').

(VIb) By (III'), see the argument for (II).
\end{proof}
}








\blue{
\section{Deficiency one mass-action systems} \label{sec:deficiency_one}

In the setting of the deficiency one theorem, 
we assume that the network is decomposed into $\ell$ independent subnetworks with one (absorbing) strong component.
That is, the classes arise from $\ker N$ and they are assumed to be {\em connected}.
This implies $\delta = \delta_1 + \cdots + \delta_\ell$,
see Fact~\ref{fac:independent}.
The main result of this subsection is Lemma~\ref{lem:graphical},
a graph-theoretical argument used in the proof for the existence of a positive equilibrium
in the case of weakly reversible networks,
see Theorem~\ref{thm:def} in Subsection~\ref{subsec:def1thm} and its proof in Subsection~\ref{subsec:proof}.
}

\newcommand{\vertC}{\blue{\mathcal{C}}}
\newcommand{\vertV}{\blue{\mathcal{V}}}

Given the directed graph $G=(V,E)$ of a network, we introduce the bipartite graph $\mathcal{G} = (\vertC,\vertV,\mathcal{E})$ with the first vertex set $\vertC = \{1,\ldots,\ell\}$ representing subnetworks of $G$,
the second vertex set $\vertV \subseteq V$ representing ``shared'' vertices (appearing in more than one subnetwork),
and the edge set $\mathcal{E} = \{ (j,i) \in \vertC \times \vertV \mid i \in V^j \}$, connecting shared vertices with their subnetworks.

\begin{lemma} \label{lem:graphical}
Let $(G,y)$ \blue{with $G=(V,E)$} be a reaction network decomposed into $\ell$ connected, independent subnetworks.
\blue{Recall the (combined) incidence matrices, $I_E$ and $I_E^*$, the number $l$ of connected components, and the disjoint union $V^\sqcup$ of source vertex sets.}
Then, $|V^\sqcup| - |V| = \ell - l$,
that is, $\ker I_E = \ker I_E^*$.
\end{lemma}
\begin{proof}
Since we assume that subgraphs are connected,
all vertices of a given subgraph derive from only one component of the original graph,
and we can consider the decomposition of $l=1$ component into $\ell$ subgraphs. 
For the resulting bipartite graph $\mathcal{G}$,
$|\vertC| = \ell$ and 
\[
|\mathcal{E}| = \sum_{i \in \vertV} \text{deg}(i) = \sum_{i \in \vertV} (\text{deg}(i) - 1 + 1) = V^\sqcup - V + |\vertV| .
\]
Since $\mathcal{G}$ is connected, $|\mathcal{E}| \ge |\vertC| + |\vertV| -1$, that is, $V^\sqcup - V \ge \ell - 1$.
For clarity, note that a ``double edge'' $(i j i')$ indicates that the shared vertices $i, i'$ reside in subgraph~$j$
and hence $0 \neq y(i')-y(i) \in S_j$.
Now, assume $V^\sqcup - V > \ell - 1$.
Then, there is a cycle of $n\ge2$ ``double edges'' with vertex sequence $i_1 j_1 i_2 j_2 \ldots i_n j_n i_{n+1}$,
where $i_{n+1}=i_1$, $i_* \in \vertV \subseteq V$, $j_* \in \vertC = \{1,\ldots,\ell\}$. 
Hence,
\[
\underbrace{y(i_2) - y(i_1)}_{\in S_{j_1}} + \cdots + \underbrace{y(i_1) - y(i_n)}_{\in S_{j_n}} = 0 ,
\]
contradicting $S = S_1 \oplus \cdots \oplus S_\ell$.
Hence, $V^\sqcup - V = \ell - 1$.
By combining the arguments for the $l$ components of the original graph, 
$V^\sqcup - V = \ell - l$.

Recall $I_E \in \{-1,0,1\}^{V \times E}$ and $I_E^* \in \{-1,0,1\}^{V^\sqcup \times E}$.
Obviously, $\ker I_E \supseteq \ker I_E^*$.
By the rank-nullity theorem,
$\dim (\ker I_E) = |E| - \dim (\im I_E) = |E| - \dim (\im I_E^\trans) 
= |E| - |V| + \dim (\ker I_E^\trans) = |E| - |V| + l$
as well as
$\dim (\ker I_E^*) = |E| - |V^\sqcup| + \ell$.
Hence,
$\dim (\ker I_E) - \dim (\ker I_E^*) = |V^\sqcup| - |V| - (\ell-l)$. 
\end{proof}

\begin{fact} \label{fac:independent} \marginnote{not used}
Let $(G,y)$ 
be a reaction network decomposed into $\ell$ connected subnetworks. 
The following statements are equivalent.
\begin{enumerate}
\item 
$S = S_1 \oplus \cdots \oplus S_\ell$.
\item
$\delta = \delta_1 + \cdots + \delta_\ell$
and
$|V^\sqcup| - |V| = \ell - l$.
\end{enumerate}
\end{fact}
\begin{proof}
$2 \implies 1$.
On the one hand, $\delta = |V|-l-\dim S$.
On the other hand, $\delta_j = |V^j| - 1 - \dim S_j$ 
and hence $\sum_j \delta_j = |V^\sqcup| - \ell - \sum_j \dim S_j$.
Altogether,
\[
\sum_j \dim S_j - \dim S = \delta - \sum_j \delta_j + |V^\sqcup| - |V| - (\ell - l) .
\]
$1 \implies 2$.
By Lemma~\ref{lem:graphical}, 
if $S = S_1 \oplus \cdots \oplus S_\ell$,
then $|V^\sqcup| - |V| = \ell - l$.
By the displayed equation above, also $\delta = \sum_j \delta_j$.
\end{proof}

\blue{
In the ``classical'' setting ($\ell = l)$,
the connected subnetworks are the components of the graph (the linkage classes), 
and hence they are independent exactly when their deficiencies add up to the network deficiency.
}

\blue{
\subsection{The (extended) deficiency one theorem} \label{subsec:def1thm}
}

We prove Theorem~\ref{thm:def} below
which already extends the ``classical'' deficiency one theorem~\cite[Theorem~6.2.1]{Feinberg1987}
from components (``linkage classes'') to independent subnetworks, cf.\ \cite[Remark~6.2.D]{Feinberg1987} and \cite[Theorem~A.1]{Feinberg1995a}.

\begin{theorem}[$\delta_j \le 1$, independent subnetworks] \label{thm:def}
Let $(G_k,y)$ be a mass-action system with $\ell$ independent subnetworks.
\blue{Recall the deficiency $\delta$ and the number $t$ of absorbing strong components.}
Let $(G_k,y)$ fulfill the following conditions:

For every (independent subnetwork) $j=1,\ldots,\ell$, 
\begin{enumerate}[(i)]
\item 
$\delta_j\le 1$ and 
$t_j=1$.
\end{enumerate}
If (ii) there exists a positive equilibrium, 
then there exists a unique positive equilibrium within every stoichiometric compatibility class.

If the system is weakly reversible, then (ii) for all rate constants $k$.
\end{theorem}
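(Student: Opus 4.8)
The plan is to derive Theorem~\ref{thm:def} as a special case of Theorem~\ref{thm:main:mass-action} by verifying its hypotheses (I), (IIb), and (III). Since the subnetworks are independent, the decomposition machinery of Section~\ref{sec:decomp} applies, and it suffices to check the numerical/combinatorial conditions per subnetwork. First I would observe that the assumption $\delta_j \le 1$ together with $t_j = 1$ forces the per-class monomial dependency $d_j$ to be at most one: indeed, from $d_j = |V_s^j| - 1 - \dim L_j$ and $\delta_j = |V^j| - 1 - \dim S_j$, together with $L_j \subseteq S_j$ and the fact that a single terminal strong component means exactly $|V^j| - |V_s^j|$ equals the number of non-source vertices (which lie in that terminal component), one gets $d_j \le \delta_j \le 1$. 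The condition $t_j = 1$ also gives $t'_j \le 1$, and more importantly rules out the ``pathological'' case; in fact $t_j = l_j = 1$ implies $K_j = S_j$ (by the displayed implication $t=l \implies K=S$ in Section~\ref{sec:reaction_networks} applied to the connected subnetwork $G^j$). Combined with $L_j \subseteq S_j$ and a dimension count showing $\dim L_j = \dim S_j$ when $\delta_j$ and $d_j$ agree, this should yield $K_j = L_j = S_j$, hence (IIb) $L = S$ globally (since $S = \bigoplus S_j$ and $L = \bigoplus L_j$), and the first bullet of (III).

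Next I would handle the case $d_j = 1$, which is where the sign conditions on $\tilde b^j$ must be checked. Here $\delta_j = 1$ and the subnetwork has a one-dimensional dependency subspace $D_j$, so $\dim P_j = 1$ by Lemma~\ref{lem:d_dimP}. The key point is that the two vertices $y^{j,1}, y^{j,2}$ of the coefficient polytope $P_j$ correspond to the (rescaled) indicator-type vectors supported on the terminal strong component, or more precisely to a basis of $\ker \Gamma_k^j \cap \R^{V_s^j}_\ge$; and the generator $b^j$ of $\ker \mathcal{B}_j$ encodes the unique affine dependency among the source complexes. The hard part will be showing that Feinberg's deficiency-one structural hypothesis (single terminal strong component, deficiency one) forces $\tilde b^j$ to satisfy the partial-sum sign pattern and $\tilde b^j_1 \cdot \tilde b^j_{\omega_j} < 0$. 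I expect this to reduce to an argument about the sign structure of the unique (up to scaling) vector in $\ker \mathcal{B}_j$ relative to the ordering induced by $q^j$: since $q^j$ orders the source complexes by a linear functional and $b^j$ spans a one-dimensional affine-dependency space, the vector $b^j$ must change sign exactly once along this ordering, giving the partial-sum condition essentially for free, while $\tilde b^j_1 \cdot \tilde b^j_{\omega_j} < 0$ should follow from the existence of a positive equilibrium (hypothesis (iii)) via Theorem~\ref{thm:existence}.

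Then, with (I) supplied directly by hypothesis (iii) (a positive equilibrium $x^*$ gives $k \circ (x^*)^{Y_s I_{E,s}} \in \ker N \cap \R^E_>$, hence $\ker \Gamma_k \cap \R^{V_s^\sqcup}_> \neq \emptyset$ after passing through the Laplacian reformulation), all hypotheses of Theorem~\ref{thm:main:mass-action} in its (IIb) form hold, and that theorem delivers a unique positive equilibrium in every stoichiometric compatibility class. For the final sentence, weak reversibility of $(G_k,y)$ implies weak reversibility of each connected subnetwork $G^j$ (or at least that each is weakly reversible once independence is in place), and a weakly reversible network always has a complex-balanced, hence positive, equilibrium for all rate constants $k$ by the Horn--Jackson / Birch argument (Theorem~\ref{thm:birch} applied to the Laplacian kernel, which is positive on each strong component); this gives (iii) for all $k$.

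The main obstacle I anticipate is the translation between Feinberg's ``deficiency one'' combinatorial hypothesis and the explicit sign conditions on the lumped dependency vector $\tilde b^j$ — in particular, proving the ``single sign change'' of $b^j$ along the $q^j$-ordering rigorously. A clean way around it may be to invoke the structure theory of deficiency-one networks (the dependency space is spanned by a single reaction-vector-type relation tied to the two ``cuts'' of the linkage class), but if that is not available as a cited result, one would need a short self-contained lemma showing that for a connected, deficiency-one, single-terminal-component subnetwork, $\ker \mathcal{B}_j$ is one-dimensional and its generator has exactly one sign change with respect to any linear ordering of the source vertices coming from a vertex of the coefficient polytope. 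I would isolate this as a separate lemma before the proof of Theorem~\ref{thm:def} to keep the modular structure advertised in the introduction.
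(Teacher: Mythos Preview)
Your overall strategy---deduce Theorem~\ref{thm:def} from Theorem~\ref{thm:main:mass-action} by verifying (I), (IIb), (III)---matches the paper's. But the central step, the sign conditions on $\tilde b^j$ when $d_j=1$, has a real gap.

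Your heuristic ``$q^j$ orders the source complexes by a linear functional, so the affine dependency $b^j$ changes sign exactly once'' does not hold up. The vector $q^j$ is \emph{not} a linear functional of the complexes: it is built from the vertices of the coefficient polytope $P_j \subset \ker\Gamma_k^j$, and hence depends on the rate constants through the Laplacian $R_k^j$, not just on the geometry of~$Y_s^j$. There is no general principle forcing a generator of a one-dimensional affine-dependency space to have a single sign change along such an ordering. The paper's actual mechanism is Feinberg's \emph{second salt theorem} (Lemma~\ref{lem:salt}): one rescales the Laplacian to $R_{\tilde k}$ so that $R_{\tilde k}\,\hat q=0$ and $R_{\tilde k}\,\ones=\beta$, identifies $b^j$ with $\beta$ (via $\im\beta=\ker Y^j\cap\im R_k^j=\ker\mathcal B_j$), and then a direct flow-type argument on the terminal strong component yields the partial-sum inequalities $\sum_{i'\le i}\beta_{i'}\ge0$ and the strict inequality $\tilde b_1>0$. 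Since $\sum_i\tilde b_i=0$, this forces $\tilde b_\omega<0$. This graph-theoretic lemma is the missing ingredient; without it (or a genuine substitute) your argument for (III) does not close.

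Relatedly, your plan to extract $\tilde b^j_1\cdot\tilde b^j_{\omega_j}<0$ from Theorem~\ref{thm:existence} is backward: that theorem says existence \emph{for all} $c$ is equivalent to the sign condition, whereas hypothesis~(iii) supplies a solution only for the given parameters. Two smaller issues: your derivation of $L_j=S_j$ is circular (you invoke ``$\delta_j$ and $d_j$ agree'', but equality of those numbers already presupposes $\dim L_j=\dim S_j$); the paper instead proves $L_j=S_j$ directly in Lemma~\ref{lem:one_class} by analyzing how the single non-source vertex sits relative to the source complexes. And for the weak-reversibility clause, weakly reversible networks do \emph{not} admit complex-balanced equilibria for all~$k$ unless $\delta=0$; what you actually need, and what the paper uses, is merely that weak reversibility of each $G^j$ gives $\ker R_k^j\cap\R_>^{V_s^j}\neq\emptyset$, hence $\ker\Gamma_k^j\cap\R_>^{V_s^j}\neq\emptyset$, which is condition~(I)---the existence of a positive equilibrium then \emph{follows} from Theorem~\ref{thm:main:mass-action}, not the other way around.
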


\begin{remark}[\blue{on} the statement]
Note that in the deficiency one theorems, the existence of a positive equilibrium is assumed. In our dependency one theorem, we only assume the necessary condition $\ker \Gamma_k^* \cap \R^{V^\sqcup_s}_> \neq \emptyset$. 
Indeed, \blue{in Theorem~\ref{thm:def}, the existence assumption can be replaced by the necessary condition.}
A different (more involved) condition was given by Boros~\cite{boros2013existence}.
\end{remark}

\begin{remark}[\blue{on} the proof]
\blue{A detailed} proof of the classical theorem \blue{is given} in \cite[Sections~5--8 \blue{and} Appendix~B]{Feinberg1995a}.
For an outline of the proof of the extension (to independent subnetworks or ``direct partitions''), see \cite[Appendix~A]{Feinberg1995a}. 
In~\cite[Sections~5 \blue{and} 6]{Feinberg1995a},
Feinberg \blue{shows} 
that the set of positive equilibria has the form $x^* \circ e^{S^\perp}$ (a particular solution $x^*$ \blue{times} the exponentiation of the orthogonal complement of the stoichiometric subspace $S$),
by \blue{reformulating} the problem using a monotonic function.
\blue{Our approach is based on} a geometric framework for parametrized systems of polynomial equations and inequalities~\cite{MuellerRegensburger2023b,MuellerRegensburger2023a}. 
\blue{Within this framework,}
we establish the existence of a point $y^*$ on the coefficient polytope
(\blue{where $y^*$ is in one-to-one correspondence with} $x^* \circ e^{S^\perp}$).
As Feinberg,
we use the ``second salt theorem'' (a network argument) and \blue{a version of} ``Birch's theorem''.
In the case of weakly reversible networks,
we replace the proof for the existence of a positive equilibrium in \cite[Section~8]{Feinberg1995a}
by a graph-theoretical argument
\blue{(see the last paragraph in the proof of Theorem~\ref{thm:def}, which uses Lemma~\ref{lem:graphical}).}
\end{remark}

\blue{In Subsection~\ref{subsec:proof},}
we will show that the deficiency one theorem follows from our more general dependency one results,
\blue{Theorem~\ref{thm:main:mass-action} or its Corollary~\ref{cor:main:mass-action}.}
Indeed, we exhibit how the network conditions in Theorem~\ref{thm:def}
ensure the conditions in \blue{Corollary~\ref{cor:main:mass-action}.}



\blue{
First,
we show that the kinetic, stoichiometric, and monomial difference subspaces of the subnetworks agree.}

\begin{lemma} \label{lem:one_class} 
Let $(G_k,y)$ be a mass-action system with \blue{$t=1$ (one absorbing strong component).}
\blue{Recall the number $t'$ of non-singleton absorbing strong components
and the deficiency $\delta$. 
Further, recall the kinetic and stoichiometric subspaces, $K$ and $S$, respectively. 
For the equilibrium equation~$\Gamma_k \, x^{Y_s} = 0$, recall the dependency $d$ 
and the monomial difference subspace~$L$,
and assume} 
that $\ker \Gamma_k \cap \R^{V_s}_> \neq \emptyset$. 
Then, 
\begin{enumerate}[1.]
\item 
$K=L=S$.
\item 
$d = \delta + t '-1$.
\end{enumerate}
\end{lemma}

\blue{
\begin{proof}
1.
Since $t=l=1$, $\im R_k = \im I_E$
and hence $K = \im(Y R_{k}) = \im(Y I_{E}) = S$.

If $t'=1$ (the absorbing strong component is not a singleton), then $V_s = V$ and $L=S$.
If $t'=0$ (the absorbing strong component is a singleton),
then $V = V_s \dot\cup \{i_*\}$ and $E = E_s \dot\cup E_*$,
where $E_s$ is the set of reactions between source vertices (and hence $E_*$ is the set of reactions with target vertex $i_*$).

Now, note that $S$ is the linear subspace associated with the affine hull of the complexes, $\aff(Y) = \aff (y(i) \mid i \in V)$,
and $L$ is the linear subspace associated with the affine hull of the source complexes, $\aff(Y_s) = \aff (y(i) \mid i \in V_s)$. We will show $y(i_*) \in \aff(Y_s)$ which implies $\aff(Y_s) = \aff(Y)$ and hence $L=S$.

Since $\ker \Gamma_k \cap \R^{V_s}_> \neq \emptyset$, there is $\xi \in \R^{V_s}_>$ such that $\Gamma_k \, \xi = 0$, and hence there is $\alpha = \diag(k) (I_{E,s})^\trans \xi \in \R^E_>$ such that $Y I_E \, \alpha = 0$. 
That is, $Y I_{E_s} \alpha_{E_s} + Y I_{E_*} \alpha_{E_*} = 0$,
using $I_E = \begin{pmatrix} I_{E_s} & I_{E_*} \end{pmatrix}$ and $\alpha = \binom{\alpha_{E_s}}{\alpha_{E_*}}$.
Clearly, $Y I_{E_*} \alpha_{E_*} = - Y I_{E_s} \alpha_{E_s} \in L$.
Explicitly,
\[
\delta y := Y I_{E_*} \alpha_{E_*} 
= \sum_{(i \to i_*) \in E_*} \left( y(i_*) - y(i) \right) \alpha_{i \to i_*} 
= \bar \alpha \Bigg( y(i_*) - \underbrace{\sum_{(i \to i_*) \in E_*} \beta_{i \to i_*} \, y(i)}_{y} \Bigg) ,
\]
where  $\bar \alpha = \sum_{(i \to i_*) \in E_*} \alpha_{i \to i_*}$,
$\beta_{i \to i_*} = \alpha_{i \to i_*} / \bar \alpha$, and $\sum_{(i \to i_*) \in E_*} \beta_{i \to i_*} = 1$.
That is, $y(i_*) = y + \delta y / \bar \alpha$,
where $y \in \aff(Y_s)$ and $\delta y \in L$.
Hence, also $y(i_*) \in \aff(Y_s)$. 

2. Recall $d = |V_s| - 1 - \dim L$ and $\delta = |V| - 1 - \dim S$. 
By statement 1, $L=S$ and hence $d -\delta = |V_s| - |V| = t -t' = 1 - t'$. 
Hence $d = \delta + t'-1$.
\end{proof}

Regarding dimensions,
the assumption $t=1$ has two consequences: 
if $t'=1$ (non-singleton absorbing strong component), then $d=\delta$;
however, if $t'=0$ (singleton absorbing strong component), then $d=\delta-1$.
In particular,
$\delta=1$ and $t'=0$ imply $d=0$.

Second, to show that the sign conditions (for the dependency vector) in Corollary~\ref{cor:main:mass-action} are fulfilled,
we provide a variant of Feinberg's ``second salt theorem'' \cite{Feinberg2019}.
}


\blue{
\subsection{The second salt theorem} \label{subsec:salt}
}

We continue our study of \blue{subnetworks} (with $\delta \le 1$, $t=1$, and $\ker (Y R_k) \cap \R^{V_s}_> \neq \emptyset$).
By Lemma~\ref{lem:d_dimP} and Lemma~\ref{lem:one_class}, $d=\dim P\le1$.
We consider the nontrivial case $d=\dim P=1$ which has $\delta=t'=1$,
again by Lemma~\ref{lem:one_class}.

Let $y^1, y^2 \in \ker (Y R_k) \cap \R^{V_s}_\ge$ be the vertices of $P$.
Since $t'=1$,
there is a nonnegative $\hat y \in \overline P$ with $R_k \, \hat y = 0$ and support on the (non-singleton) \blue{absorbing} strong component.
(It lies in the interior of the coefficient polytope,
if the network is weakly reversible,
and on the boundary, otherwise.)
Since $\delta = 1$,
there is a positive $\bar y \in P$ with $R_k \, \bar y = \beta \in \R^V$ and $\im \beta = \ker Y \cap \im R_k$.
(It lies in the interior,
possibly after adding $\hat y$.)
That is,
\begin{equation} \label{eq:two_ys}
\begin{aligned}
R_k \, \hat y &= 0 , \\
R_k \, \bar y &= \beta.
\end{aligned}
\end{equation}

Without loss of generality, we assume that the vertices $y^1$, $y^2$ of the coefficient polytope (a line segment) and the vectors $\hat y$, $\bar y$ are ordered as $y^1 \cdots \hat y \cdots \bar y \cdots y^2$, 
where $\hat y=y^1$ if the network is not weakly reversible.
Now, 
$\hat y = \hat \la y^1 + (1- \hat \la) y^2$ for some $\hat \la \in (0,1]$
and
$\bar y = \bar \la y^1 + (1- \bar \la) y^2$ for some $\bar \la \in (0,1)$,
where $\hat \la > \bar \la$.
Let
\[
\hat q = \hat y \circ (\bar y)^{-1} =
\left( \hat \la y^1 + (1- \hat \la) y^2 \right) \circ \left( \bar \la y^1 + (1- \bar \la) y^2 \right)^{-1} .
\]
It is easy to see%
\footnote{
For two vectors $y^1, y^2 \in \R^m_\ge$ with $y^1+ y^2, \, \bar \la y^1 + (1- \bar \la) y^2 \in \R^m_>$,
consider the maximal minors of the matrices $(y^1 \quad y^2), \, (y^1-y^2 \quad y^1+y^2), \, (\hat y \quad \bar y) \in \R^{m \times 2}$.
Explicitly, let
\begin{gather*}
(y^1 \quad y^2) = 
\begin{pmatrix}
a_1 & a_2 \\
b_1 & b_2 \\
\vdots & \vdots
\end{pmatrix}
, \qquad
(y^1-y^2 \quad y^1+y^2) = 
\begin{pmatrix}
a_1-a_2 & a_1+a_2 \\
b_1-b_2 & b_1+b_2 \\
\vdots & \vdots
\end{pmatrix}
, \\
(\hat y \quad \bar y) = 
\begin{pmatrix}
\hat \la a_1 + (1-\hat \la)a_2 & \bar \la a_1 + (1-\bar \la)a_2 \\
\hat \la b_1 + (1-\hat \la)b_2 & \bar \la b_1 + (1-\bar \la)b_2 \\
\vdots & \vdots
\end{pmatrix} .
\end{gather*}
If $\hat \la > \bar \la$, then
\begin{gather*}
\sign \left(
\frac{\hat \la a_1 + (1-\hat \la)a_2}{\bar \la a_1 + (1-\bar \la)a_2} - \frac{\hat \la b_1 + (1-\hat \la)b_2}{\bar \la b_1 + (1-\bar \la)b_2} 
\right) 
=
\sign \left(
\frac{a_1-a_2}{a_1+a_2} - \frac{b_1-b_2}{b_1+b_2}
\right) 
=
\sign
\begin{vmatrix}
a_1 & a_2 \\
b_1 & b_2\end{vmatrix} .
\end{gather*}
}
that the order of the entries of $\hat q$
agrees with the order of the entries of
\[
q = \left(y^1-y^2\right) \circ \left(y^1+y^2\right)^{-1}
\]
specified in Definition~\ref{definition_one_class}.

Finally, we introduce the scaled rectangular graph Laplacian 
\[
R_k \diag(\bar y) = R_{\bar k}
\quad \text{with} \quad \bar k = k \circ \bar y ,
\]
and we rewrite Equations~\eqref{eq:two_ys} as
\begin{equation} \label{eq:two_ys_scaled}
\begin{aligned}
R_{\bar k} \, \hat q &= 0 , \\
R_{\bar k} \, \ones_{V_s} &= \beta.
\end{aligned}
\end{equation}

Now, we state Feinberg's ``second salt theorem'' \cite{Feinberg2019} for the rectangular (rather than the square) graph Laplacian.

\begin{lemma} \label{lem:salt}
Let $G_k=(V,E,k)$ be a labeled simple digraph with one component.
Let $T = \{1,\ldots,t\} \subseteq V = \{1,\ldots,m\}$ be (the vertices of) an \blue{absorbing} strong component, 
let $\hat q \in \R^{V_s}_\ge$ be the corresponding vector in the kernel of the rectangular graph Laplacian,
that is, $R_k \, \hat q = 0$ and $\supp \hat q = T$,
and assume $\hat q_1 \ge \cdots \ge \hat q_t > \hat q_{t+1} = \cdots = \hat q_m = 0$. 
Further, let $\beta = R_k \, \ones_{V_s} \in \R^V$.
Then, 
\[
\sum_{i'=1}^i \beta_{i'} \ge 0 ,
\quad \text{for } i \in T .
\]
If $i < t$ and $\hat q_{i} > \hat q_{i+1}$, then $\sum_{i'=1}^i \beta_{i'} > 0$. 
Finally,
$\sum_{i'=1}^t \beta_{i'} = 0$ if and only if $T=V$.
\end{lemma}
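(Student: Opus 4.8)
The statement is Feinberg's ``second salt theorem'' adapted to the rectangular graph Laplacian $R_k = I_E \diag(k) (I_{E,s})^\trans$. I would prove it by unwinding the definitions: for a source vertex $j \in V_s$, the $j$-th column of $R_k$ is $\sum_{(j \to j') \in E} k_{j \to j'} \, (e_{j'} - e_j)$, so $\beta = R_k \ones_{V_s} = \sum_{(j \to j') \in E} k_{j \to j'} (e_{j'} - e_j)$, and for any index set $\{1,\ldots,i\}$ the partial sum $\sum_{i'=1}^i \beta_{i'}$ equals the net flux $\sum_{j' \le i} \beta_{j'} = (\text{flow into } \{1,\ldots,i\}) - (\text{flow out of }\{1,\ldots,i\})$, i.e.\ $\sum_{(j \to j') \in E,\ j > i,\ j' \le i} k_{j \to j'} - \sum_{(j \to j') \in E,\ j \le i,\ j' > i} k_{j \to j'}$, where only reactions \emph{out} of source vertices contribute to the outflow term. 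I would similarly use $R_k \hat q = 0$: since $\supp \hat q = T$, the equation $\sum_{(j \to j') \in E} k_{j \to j'} \hat q_j (e_{j'} - e_j) = 0$ is a statement of detailed-balance-like circulation on the terminal strong component $T$ with weights $k_{j \to j'} \hat q_j$.

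\textbf{Key steps.} First I would fix $i \in T$ and consider the ``cut'' $\{1,\ldots,i\}$, noting that by the ordering $\hat q_1 \ge \cdots \ge \hat q_t > 0 = \hat q_{t+1} = \cdots$, the vertices $\{1,\ldots,i\}$ are exactly those with $\hat q$-value at least $\hat q_i$. Second, from $R_k \hat q = 0$ I would extract, by summing the first $i$ coordinates, the balance $\sum_{j' \le i}(R_k \hat q)_{j'} = 0$, i.e.\ inflow across the cut weighted by $\hat q$ at the source equals outflow across the cut weighted by $\hat q$ at the source: $\sum_{(j \to j'),\, j>i,\, j'\le i} k_{j\to j'}\hat q_j = \sum_{(j\to j'),\, j\le i,\, j'>i} k_{j\to j'}\hat q_j$ (here $j$ always ranges over source vertices since only those carry reactions). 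Third, I would compare this with the unweighted cut sum defining $\sum_{i'=1}^i\beta_{i'}$: in the inflow term each reaction $(j\to j')$ with $j>i$ has $\hat q_j \le \hat q_i$ (or $\hat q_j = 0$ if $j \notin T$), while in the outflow term each reaction $(j\to j')$ with $j\le i$ has $\hat q_j \ge \hat q_i > 0$. Dividing the $\hat q$-weighted balance by $\hat q_i$ and using these inequalities coordinatewise yields $\sum_{(j\to j'),\,j>i,\,j'\le i} k_{j\to j'} \ge \sum_{(j\to j'),\,j>i,\,j'\le i} k_{j\to j'}\hat q_j/\hat q_i = \sum_{(j\to j'),\,j\le i,\,j'>i} k_{j\to j'}\hat q_j/\hat q_i \ge \sum_{(j\to j'),\,j\le i,\,j'>i} k_{j\to j'}$, which is exactly $\sum_{i'=1}^i \beta_{i'} \ge 0$. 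Fourth, for the strict inequality when $\hat q_i > \hat q_{i+1}$ with $i < t$: the reaction structure of the terminal strong component $T$ forces at least one reaction crossing the cut in the outflow direction $j \le i \to j' > i$ within $T$ (else $\{1,\ldots,i\} \cap T$ would be a proper nonempty forward-closed subset of the strong component $T$, contradicting strong connectivity); for that reaction $\hat q_j/\hat q_i \ge 1$ but at least one inflow reaction has $\hat q_j/\hat q_i \le \hat q_{i+1}/\hat q_i < 1$ strictly, or one counts more carefully — in any case one of the two displayed inequalities is strict. Finally, for the equality characterization: $\sum_{i'=1}^t \beta_{i'} = 0$ says there is no net flux into $T = \{1,\ldots,t\}$; since $R_k$ involves only reactions \emph{from} source vertices and $T$ is terminal (no reactions leave $T$), every reaction with target in $T$ has source in $V_s$, so $\sum_{i'=1}^t\beta_{i'} = \sum_{(j\to j'),\, j\notin T,\, j'\in T} k_{j\to j'} \ge 0$, with equality iff there is no reaction from outside $T$ into $T$; because $G$ is connected with a single component, this happens iff $V \setminus T$ is empty and there is nothing outside, i.e.\ $T = V$ (one direction needs that a connected graph with $T \subsetneq V$ terminal must have \emph{some} reaction — necessarily from a non-source-reaching vertex — entering $T$, which follows by taking a path in $G$ from $V \setminus T$ to $T$ and looking at the last edge).

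\textbf{Main obstacle.} The routine inflow/outflow bookkeeping is straightforward; the delicate part is the \emph{strict} inequality claim and the precise ``$\sum_{i'=1}^t\beta_{i'}=0 \iff T = V$'' characterization, because both require a genuinely graph-theoretic argument about strong connectivity of $T$ and connectivity of $G$ (that a cut strictly inside a strong component must be crossed in both directions, and that a connected graph cannot have a terminal component that is simultaneously a source of no incoming edges unless it is everything). I would isolate these as small combinatorial lemmas about the digraph, keeping the weighted-balance manipulation purely algebraic. One subtlety to handle with care is that $R_k$ is rectangular — indexed by $V \times V_s$ — so non-source vertices never appear as the tail of a reaction; I must make sure every ``outflow'' edge in a cut has its tail among source vertices, which is automatic, but the inflow edges into a cut may originate anywhere (still necessarily source vertices, since every edge's tail is a source vertex), so in fact $j$ always ranges over $V_s$ throughout, simplifying the argument.
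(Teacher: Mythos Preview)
Your proposal is correct and follows essentially the same cut/inflow--outflow argument as the paper: sum the first $i$ rows of $R_k\hat q=0$ and of $R_k\ones_{V_s}=\beta$, then compare using the ordering of $\hat q$. The only organizational difference is that the paper bounds the weighted inflow by $\hat q_{i+1}\cdot(\text{inflow})$ and the weighted outflow by $\hat q_i\cdot(\text{outflow})$, yielding $\sum_{i'\le i}\beta_{i'}\ge(\hat q_i/\hat q_{i+1}-1)\cdot(\text{outflow})$, which makes the strict case immediate once strong connectivity of $T$ supplies an outflow edge---this is cleaner than your ``one of the two displayed inequalities is strict'' step, which is correct but requires the extra observation that an outflow edge forces (via the balance) an inflow edge.
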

\begin{proof}
See Appendix~\ref{app:salt}. 
\end{proof}


\blue{
\subsection{\texorpdfstring{Proof of the extended deficiency one theorem \\ (for independent subnetworks)}{}} \label{subsec:proof}
}

\begin{proof}[Proof of Theorem~\ref{thm:def}]
The mass-action system $(G_k,y)$ gives rise to the polynomial equations $\Gamma_k^* \, x^{Y_s^*} = 0$ with $\ell$ classes (determined by the $\ell$ connected, independent subnetworks), cf.~Proposition~\ref{pro:decomp}.
In order to apply Corollary~\ref{cor:main:mass-action},
we show that conditions (I'), (III') there follow from conditions (i), (ii) here. 

\begin{itemize}
\item[(I')] 
If (ii), there exists a positive equilibrium,
$x \in \R^n_>$ with $\Gamma_k^* \, x^{Y_s^*} = 0$,
and hence (I').

This further implies $\ker \Gamma_k^j \cap \R^{V^j_s}_> \neq \emptyset$
for the mass-action systems $(G^j_k,y^k)$, $j=1,\ldots,\ell$,
having $t_j=1$ by (i).
Hence, Lemma~\ref{lem:one_class} applies to the subsystems.

\item[(III')]
By Lemma~\ref{lem:one_class}, \blue{$K_j = S_j = L_j$ and $d_j \le 1$,}
$j=1,\ldots,\ell$. 
It remains to consider $d_j=1$ \blue{in detail, implying} $\delta_j=1$ and $t'_j=1$;
in particular, all vertices are source vertices, 
that is, $V^j_s = V^j$.

\blue{By the argument before Lemma~\ref{lem:salt},}
there are $\beta \in \R^{V^j}$ with $\im \beta = \ker Y^j \cap \im R^j_k$
and $\hat q \in \R^{V^j_s}_\ge$ \blue{such that} $R^j_{\bar k} \, \hat q=0$ and $R^j_{\bar k} \, 1=\beta$ (for a certain $\bar k \in \R^{V^j_s}_>$),
cf.~Equation~\eqref{eq:two_ys_scaled}.
In particular, $\supp \hat q = T$,
where $T \subseteq V^j$ denotes (the vertices of) the \blue{absorbing} strong component.
Let $t=|T|$ and $m=|V^j|$ and assume $\hat q_1 \ge \cdots \ge \hat q_t > \hat q_{t+1} = \cdots = \hat q_m = 0$. %
By Lemma~\ref{lem:salt},
$
\sum_{i'=1}^i \beta_{i'} \ge 0 ,
\text{ for } i = 1,\ldots,t .
$
Further, if $i < t$ and $\hat q_{i} > \hat q_{i+1}$, then $\sum_{i'=1}^i \beta_{i'} > 0$.
Finally, $\sum_{i'=1}^t \beta_{i'} = 0$ if and only if $T=V^j$.

The polynomial system $\Gamma_k^j \, x^{Y^j_s}=0$ \blue{is of the form} ${A \, (c \circ x^B)=0}$ with $B = Y^j_s = Y^j$,
\blue{and} the dependency vector $b \in \R^{V^j}$ in Corollary~\ref{cor:main:mass-action}
is given by $\im b = D_k = \ker \binom{B}{\ones^\trans} = \ker \binom{Y^j}{\ones^\trans}$.
Since $\im R^j_k = \im I_E^j = \ker \ones^\trans$,
equivalently $\im b = \ker Y^j \cap \im R^j_k$.
That is, we can choose $b=\beta$ and the (in)equalities for $\beta$ guaranteed by Lemma~\ref{lem:salt} also hold for $b$.

Now, recall that $I_1, \ldots, I_\omega \subset V^j$ denote $\omega$ equivalence classes corresponding to equal (consecutive) components of~$\hat q$
and that $\bar b \in \R^\omega$ with $\bar b_i = \sum_{i' \in I_i} b_{i'}$
is the vector of lumped~$b$'s.
Since $\sum_{i'=1}^i b_{i'} \ge 0$ for $i=1,\ldots,t$, 
\blue{we get} ${\sum_{i'=1}^i \bar b_{i'} \blue{>} 0}$ for $i=1,2,\ldots,\omega-1$. 
\blue{(If $T \subset V^j$ and hence $I_\omega = \{t+1,\ldots,m\}$,
then $\sum_{i'=1}^{\omega-1} \bar b_{i'} = \sum_{i'=1}^t b_{i'} > 0$.)}
Finally, since $\sum_{i=1}^\omega \bar b_i = 0$, we get $\bar b_\omega < 0$
and hence $\bar b_1\cdot \bar b_\omega <0$.
\end{itemize}

By Corollary~\ref{cor:main:mass-action}, there exists a unique positive equilibrium within every stoichiometric compatibility class.

\blue{It remains to show that weak reversibility implies (ii).}
By Lemma~\ref{lem:graphical}, $\ker I_E = \ker I_E^*$,
and hence the cycles of the graph $G$ agree with the cycles of the subgraphs $G^j$, $j=1,\ldots,\ell$.
Hence, if $G$ is weakly reversible, then every $G^j$ is weakly reversible and $\ker R^j_k \cap \R^{V_s}_> \neq \emptyset$.
Since $\Gamma_k^j = Y^j R^j_k$, also $\ker \Gamma_k^j \cap \R^{V_s}_> \neq \emptyset$,
and since $\Gamma_k^* = \begin{pmatrix} \Gamma^1_k & \ldots & \Gamma^\ell_k \end{pmatrix}$, also $\ker \Gamma_k^* \cap \R^{V^\sqcup_s}_> \neq \emptyset$.
That is, (ii).
\end{proof}


\section{Examples} \label{sec:examples}

\begin{example}
{\bf Deficiency two} network with one (singleton) \blue{absorbing} strong component.

\begin{figure}[!ht]
\begin{center}
\begin{tikzpicture}[scale=2]
    \input{fig_1_new}
\end{tikzpicture}
\caption{
\blue{Reaction network (as an embedded graph)} with 5 vertices and 4 source vertices
and hence $\delta = 5-1-2 = 2$, but $d = 4-1-2 = 1$.}
\end{center}
\end{figure}

The ODE associated with the mass-action system is given by
\[
\begin{split}
\dd{x_1}{t} &= k_{14} + k_{23} \, x_1^2 x_2 - k_{35} \, x_1^3 + k_{42} \, x_1 x_2 , \\
\dd{x_2}{t} &= k_{14} - k_{23} \, x_1^2 x_2 ,
\end{split}
\]
and steady state can be written as $\Gamma_k \, x^{Y_s}=0$ 
with
$\Gamma_k = \begin{pmatrix} k_{14} & k_{23}  & -k_{35} & k_{42} \\ k_{14} &  -k_{23} & 0  & 0
\end{pmatrix}$ 
and 
$Y_s = \begin{pmatrix} 0 & 2 & 3 & 1 \\ 0 & 1 & 0 & 1
\end{pmatrix}$.
We show that conditions (I), (II), (III), and (IVb) of Theorem~\ref{thm:main:mass-action} are satisfied.

(I) $\ker \Gamma_k\cap\R^4_{>0}$ is generated by 
$y^1 = (
\frac{1}{k_{14}} , \frac{1}{k_{23}} , \frac{2}{k_{35}} , 0
)^\trans$ 
and 
$y^2 = (
0 , 0 , \frac{1}{k_{35}} , \frac{1}{k_{42}} 
)^\trans$
and hence $\ker \Gamma_k\cap\R^4_{>0}\neq\emptyset$.
As a consequence, $q = ( 1 , 1 , q_3 , -1 )^\trans$ with $q_3 \in (-1,1)$.
(The vertices of the graph $G$ have been labeled such that the components of $q$ are ordered.)

(II) Satisfied trivially.

(III) 
Clearly, $d = 4-1-2 = 1$. Since $\dim(K) = \dim(\im{\Gamma_k})=2$, we get $K=L$. 
Finally, the (lumped) dependency vectors are given by $b=(
1,3,-1,-3
)^\trans$ 
and 
$\bar b = (
4,-1,-3
)^\trans$
and hence
$\displaystyle \sum_{i'=1}^{i} \bar b_{i'} \ge 0$ for $i=1,2$ 
as well as
$\bar b_1\cdot\bar b_3 = {4\cdot (-3) < 0}$.

(IVb) Since $L = \R^2$ holds for the monomial difference subspace (and $L \subseteq S$), we get $L=S$.

Using Theorem~\ref{thm:main:mass-action}, we find that there exists a unique positive equilibrium in every stoichiometric compatibility class, for all rate constants.
\end{example}

\clearpage

\begin{example} \label{exa2}
{\bf Deficiency two} network with {\bf \blue{two absorbing} strong components}
(one singleton and one non-singleton).

\begin{figure}[!ht]
\begin{center}
\begin{tikzpicture}[scale=2]
    \input{fig_2_new}
\end{tikzpicture}
\caption{
\blue{Reaction network (as an embedded graph)} with 5 vertices and 4 source vertices
and hence $\delta = 5-1-2 = 2$, but $d = 4-1-2 = 1$.}
\end{center}
\end{figure}
The ODE associated with the mass-action system is given by
\begin{alignat*}{5}
\dd{x_1}{t} &= - k_{15} \, x_1 - k_{23} \, x_1x_2 + 3k_{34} \, x_2 &&- 3k_{43} \, x_1^3 , \\
\dd{x_2}{t} &= + k_{12} \, x_1 - k_{21} \, x_1x_2 - k_{34} \, x_2  &&+ k_{43} \, x_1^3 ,
\end{alignat*}
and steady state can be written as $\Gamma_k \, x^{Y_s}=0$
with 
$\Gamma_k = \begin{pmatrix} -k_{15} & -k_{23}  & 3k_{34} & -3k_{43}  \\k_{12} &  - k_{21} & -k_{34} & k_{43}
\end{pmatrix}$ 
and 
$Y_s = \begin{pmatrix} 1 & 1 & 0 & 3  \\ 0 & 1 & 1  & 0
\end{pmatrix}$.
We show that conditions (I), (II), (III), and (IVb) of Theorem~\ref{thm:main:mass-action} are satisfied.

(I) $\ker \Gamma_k$ is generated by 
$y^1 = ( 3k_{21} + k_{23} , 3k_{12} - k_{15} , \frac{k_{12}k_{23} + k_{15}k_{21}}{k_{34}} , 0 )^\trans$ 
and 
$y^2 = ( 0 , 0 , \frac{1}{k_{34}} , \frac{1}{k_{43}} )^\trans$
and hence
$\ker \Gamma_k\cap\R^4_{>0}\neq\emptyset$
iff $3k_{12} - k_{15}>0$.
In this case,
$q = ( 1 , 1 , q_3 , -1 )^\trans$
with $q_3 \in (-1,1)$.
(The vertices of the graph $G$ have been labeled such that the components of $q$ are ordered.)

(II) Satisfied trivially.

(III)  
Clearly, $d = 4-1-2 = 1$. 
Since $\dim(K) = \dim(\im{\Gamma_k})=2$, we get $K=L$. 
Finally, the (lumped) dependency vectors are given by $b=(1,2,-2,-1)^\trans$ and $\bar b = (
3,-2,-1
)^\trans$
and hence
$\displaystyle \sum_{i'=1}^{i} \bar b_{i'} \ge 0$ for $i=1,2$ 
as well as
$\bar b_1\cdot\bar b_3 = {3\cdot (-1) < 0}$.

(IVb) Since $L = \R^2$ holds for the monomial difference subspace (and $L \subseteq S$), we get $L=S$.

Using Theorem~\ref{thm:main:mass-action}, we find that there exists a unique positive equilibrium in every stoichiometric compatibility class 
iff $3k_{12} - k_{15}>0$.
\end{example}

\clearpage

\begin{example}
{\bf Deficiency two} network with {\bf two} (singleton) {\bf \blue{absorbing} strong components}.

\begin{figure}[!ht]
\begin{center}
\begin{tikzpicture}[scale=2]
    \input{fig_3_new}
\end{tikzpicture}
\caption{
\blue{Reaction network (as an embedded graph)} with 5 vertices and 3 source vertices
and hence $\delta = 5-1-2 = 2$, but $d = 3-1-2 = 0$.}
\end{center}
\end{figure}
The ODE associated with the mass-action system is given by
\begin{alignat*}{11}
\dd{x_1}{t} &= (-k_{21} + k_{23}) \, x_1  &&+ ( -k_{32} + k_{34}) \, x_1^2  &&+ k_{45} \, x_1^3x_2 , \\
\dd{x_2}{t} &= k_{21} \, x_1 &&+ k_{34} \, x_1^2 &&- k_{45} \, x_1^3x_2 ,
\end{alignat*}
and steady state can be written as $\Gamma_k \, x^{Y_s}=0$
with
$\Gamma_k = \begin{pmatrix} -k_{21} + k_{23} & -k_{32} + k_{34} &  k_{45}  \\k_{21} &   k_{34} & -k_{45} 
\end{pmatrix}$ 
and 
$Y_s = \begin{pmatrix} 1 & 2 & 3  \\ 0 & 0 & 1 \end{pmatrix}$.
We show that conditions (I), (II), (III), and (IVb) of Theorem~\ref{thm:main:mass-action} are satisfied.

(I) $\ker \Gamma_k$ is generated by the vector $(\frac{k_{32} - 2k_{34}}{k_{21}k_{32} - 2k_{21}k_{34} + k_{23}k_{34}},\frac{k_{23}}{k_{21}k_{32} - 2k_{21}k_{34} + k_{23}k_{34}},\frac{1}{k_{45}})^\trans$
and hence
$\ker \Gamma_k\cap\R^4_{>0}\neq\emptyset$ iff $k_{32} - 2k_{34} >0$. 

(II) Satisfied trivially.

(III)  Clearly, $d = 3-1-2 = 0$. Since $\dim(K) = \dim(\im{\Gamma_k})=2$, we get $K=L$. 

(IVb) Since $L = \R^2$ holds for the monomial difference subspace (and $L \subseteq S$), we get $L=S$.

Using Theorem~\ref{thm:main:mass-action}, we find that there exists a unique positive equilibrium in every stoichiometric compatibility class
iff $k_{32} - 2k_{34} >0$. 
\end{example}

\subsection*{Acknowledgements}

This research was funded in whole, or in part, by the Austrian Science Fund (FWF), 
grant DOI 10.55776/PAT3748324 to SM.

For open access purposes, the authors have applied a CC BY public
copyright license to any author-accepted manuscript version arising
from this submission.





\addcontentsline{toc}{section}{References}
\bibliographystyle{abbrv}
\bibliography{old,new,MR}


\clearpage

\begin{appendix}

\section*{Appendix}


\section{Proof of Theorem~\ref{thm:existence}} \label{app:proof_existence}

\begin{proof}
In the proof of Theorem~\ref{thm:one_class}, we have shown that $\bar b_1\cdot \bar b_\omega <0$ implies $|Y_c| \ge 1$ for all~$c$. 
We now show the other direction, $|Y_c| \ge 1$ for all~$c$ implies $\bar b_1\cdot \bar b_\omega <0$. 

Recall that \blue{determining $Y_c$} is equivalent to solving
$f(t) = c^*$ \blue{for $t \in (-1,1)$,} 
where 
\[
f(t) = \prod_{i=1}^\blue{\omega} (1+t\bar q_i)^{\bar b_i} 
\]
\blue{and $1 = \bar q_1  > \cdots > \bar q_\omega=-1$.
More explicitly, $f(t) = (1+t)^{\bar b_1} \cdot \, \cdots \, \cdot (1-t)^{\bar b_\omega} > 0$.

Hence, $|Y_c|\ge1$ for all $c$ is equivalent to $|\{ t \in (-1,1) \mid f(t)=c^* \}| \ge 1$ for all $c^*>0$.
Now, for all $\delta>0$, the function $f$ attains positive minimum and maximum values on the compact interval $[-1+\delta, 1 - \delta]$, and $f$ being surjective onto $(0,\infty)$ requires $\displaystyle\lim_{t \to -1} f(t) = 0$ and $\displaystyle\lim_{t \to 1} f(t) = \infty$ (or vice versa). This is equivalent to $\bar b_1 > 0$ and $\bar b_\omega < 0$ (or vice versa), that is $\bar b_1\cdot \bar b_\omega <0$.}
\end{proof}


\section{Reaction networks}

\subsection{Index notation} \label{app:index}

We explicitly state the incidence and source matrices
as well as the \blue{(rectangular)} Laplacian matrix \blue{of a labeled simple digraph $(V,E,k)$, using} index notation.

\begin{enumerate}[(i)]
\item 
The incidence matrix
$I_E \in \R_>^{V \times E}$ 
is given by
\[
(I_E)_{i,j \to j'} =
\begin{cases}
-1 , & \text{if } i = j , \\
1 , & \text{if } i = j' , \\
0 , & \text{otherwise.}
\end{cases}
\]
\item 
The source matrix
$I_{E,s} \in \R_>^{V_s \times E}$ is given by
\[
(I_{E,s})_{i,j \to j'} =
\begin{cases}
1 , & \text{if } i = j , \\
0 , & \text{otherwise.}
\end{cases}
\]
\item 
The rectangular Laplacian matrix $R_k \in \R^{V \times V_s}$ is given by
\[
(R_k)_{i,j} =
\begin{cases}
k_{j \to i} , & \text{if } (j \to i) \in E , \\
-\sum_{(i \to i') \in E} k_{i \to i'} , & \text{if } i = j, \\
0 , & \text{otherwise.}
\end{cases}
\]
It can be decomposed as $R_k = I_E \diag(k) (I_{E,s})^\trans$. 
\end{enumerate}





\subsection{Proof of the second salt theorem} \label{app:salt}

We prove the second salt theorem for rectangular (rather than square) Laplacian matrices.

\begin{proof}[Proof of Lemma~\ref{lem:salt}]
Let $T' = \{ 1,\ldots, t' \} \subseteq T$.  
The equations $R_k \, \hat q = 0$ and $R_k \, \ones_{V_s} = \beta$ imply
\begin{align*}
\sum_{\substack{(i \to i') \in \\ V \setminus T'\to T'}} k_{i\to i'} \, \hat q_i - \sum_{\substack{(i' \to i) \in \\ T'\to V \setminus T'}} k_{i'\to i} \, \hat q_{i'} 
&= 0 , \\
\sum_{\substack{(i \to i') \in \\ V \setminus T'\to T'}} k_{i\to i'} 
- \sum_{\substack{(i' \to i) \in \\ T'\to V \setminus T'}} k_{i'\to i} 
&= \sum_{i\in T'} \beta_i .
\end{align*}
If $T' \subset T$, then the first equation and the order on $\hat q$ imply
\[
\hat q_{t'+1} \sum_{\substack{(i \to i') \in \\ V \setminus T'\to T'}} k_{i\to i'} 
\ge \sum_{\substack{(i \to i') \in \\ V \setminus T'\to T'}} k_{i\to i'} \, \hat q_i 
= \sum_{\substack{(i' \to i) \in \\ T'\to V \setminus T'}} k_{i'\to i} \, \hat q_{i'} 
\ge \hat q_{t'} \sum_{\substack{(i' \to i) \in \\ T'\to V \setminus T'}} k_{i'\to i} ,
\]
and the second equation and $\hat q_{t'} \ge \hat q_{t'+1}$ further imply
\[
\sum_{i\in T'} \beta_i = \sum_{\substack{(i \to i') \in \\ V \setminus T'\to T'}} k_{i\to i'} - \sum_{\substack{(i' \to i) \in \\ T'\to V \setminus T'}} k_{i'\to i} 
\ge \left(\frac{q_{t'}}{q_{t'+1}} - 1\right) \sum_{\substack{(i' \to i) \in \\ T'\to V \setminus T'}} k_{i'\to i}
\ge 0 .
\]
In particular, $\sum_{i\in T'} \beta_i > 0$ if $\hat q_{t'} > \hat q_{t'+1}$.

If $T' = T$, then $(T \to V \setminus T) = \emptyset$, since $T$ is an \blue{absorbing} strong component, and 
\[
\sum_{i\in T} \beta_i = \sum_{\substack{(i \to i') \in \\ V \setminus T\to T}} k_{i\to i'} \ge 0 .
\]
The sum is zero if and only if $(V \setminus T\to T) = \emptyset$,
that is, 
$T=V$.
\end{proof}


\end{appendix}

\end{document}